\newcommand\nthalias[1]{\AddToHook{env/#1/begin}{\crefalias{lemma}{#1}}}
\crefname{section}{Section}{Sections}
\crefname{subsection}{\S}{\S\S}
\crefname{subsubsection}{\S}{\S\S}
\theoremstyle{plain}
\newtheorem{lemma}{Lemma}[section]
\newtheorem{proposition}[lemma]{Proposition}
\newtheorem{corollary}[lemma]{Corollary}
\newtheorem{theorem}[lemma]{Theorem}
\theoremstyle{plain}
\newtheorem{theoremN}{Theorem}
\newtheorem{propositionN}[theoremN]{Proposition}
\theoremstyle{plain}
\newtheorem{definition}[lemma]{Definition}
\newtheorem{example}[lemma]{Example}
\newtheorem{remark}[lemma]{Remark}
\newtheorem{remarks}[lemma]{Remarks}
\newtheorem{notation}[lemma]{Notation}
\crefname{definition}{definition}{definitions}
\crefname{ex}{example}{examples}
\crefname{exs}{example}{examples}
\crefname{remark}{remark}{remarks}
\crefname{remarks}{remark}{remarks}
\crefname{convention}{convention}{conventions}
\crefname{notation}{notation}{notations}
\crefname{table}{table}{tables}
\crefname{lemma}{lemma}{lemmas}
\crefname{proposition}{proposition}{propositions}
\crefname{propositionN}{proposition}{propositions}
\crefname{corollary}{corollary}{corollaries}
\crefname{corollaryN}{corollary}{corollaries}
\crefname{theorem}{theorem}{theorems}
\crefname{theoremN}{theorem}{theorems}
\crefname{enumi}{}{}
\crefname{assumption}{assumption}{Assumptions}
\crefname{construction}{construction}{Constructions}
\crefname{question}{question}{Questions}
\crefname{equation}{}{}
\numberwithin{equation}{section}
\renewcommand{\theequation}{\thesection-\arabic{equation}}
\theoremstyle{nonumberplain}
\newtheorem{proof}{Proof}
\newcommand\pf[1]{\newtheorem{#1}{Proof of \Cref{#1}}}
\newcommand\bC{{\mathbb C}}
\newcommand\bE{{\mathbb E}}
\newcommand\bG{{\mathbb G}}
\newcommand\bK{{\mathbb K}}
\newcommand\bP{{\mathbb P}}
\newcommand\bR{{\mathbb R}}
\newcommand\bZ{{\mathbb Z}}
\newcommand\cC{{\mathcal C}}
\newcommand\cM{{\mathcal M}}
\newcommand\cO{{\mathcal O}}
\newcommand\cS{{\mathcal S}}
\newcommand\1{{\bf 1}}
\DeclareMathOperator{\id}{id}
\DeclareMathOperator{\End}{\mathrm{End}}
\DeclareMathOperator{\Spec}{\mathrm{Spec}}
\DeclareMathOperator{\im}{\mathrm{im}}
\DeclareMathOperator{\spn}{\mathrm{span}}
\DeclareMathOperator{\GL}{GL}
\DeclareMathOperator{\PSL}{PSL}
\DeclareMathOperator{\SL}{SL}
\DeclareMathOperator{\PSU}{PSU}
\DeclareMathOperator{\G}{G}
\DeclareMathOperator{\Res}{Res}
\newcommand\numberthis{\addtocounter{equation}{1}\tag{\theequation}}
\newcommand{\cat}[1]{\textsc{#1}}
\newcommand{\qedhere}{\mbox{}\hfill\ensuremath{\blacksquare}}
\newcommand{\xrightarrowdbl}[2][]{%
  \xrightarrow[#1]{#2}\mathrel{\mkern-14mu}\rightarrow
}
\title{Generically-constrained quantum isotropy}
\author{Alexandru Chirvasitu}
\begin{document}

\date{}

\newcommand{\Addresses}{{% additional braces for segregating \footnotesize
  \bigskip
  \footnotesize

  \textsc{Department of Mathematics, University at Buffalo}
  \par\nopagebreak
  \textsc{Buffalo, NY 14260-2900, USA}  
  \par\nopagebreak
  \textit{E-mail address}: \texttt{achirvas@buffalo.edu}

  % % \medskip
  % % 
  % % \textsc{Department of Mathematics, INSTITUTION}
  % % \par\nopagebreak
  % % \textsc{ADDRESS}
  % % \par\nopagebreak
  % % \textit{E-mail address}: \texttt{??}
  % % 

}}

\maketitle

\begin{abstract}
  Let $V$ be a finite-dimensional unitary representation of a compact quantum group $\mathrm{G}$ and denote by $\mathrm{G}_W$ the isotropy subgroup of a linear subspace $W\le V$ regarded as a point in the Grassmannian $\mathbb{G}(V)$. We show that the space of those $W\in \mathbb{G}(V)$ for which $\mathrm{G}_W$ acts trivially on $W$ (or $V$) is open in the Zariski topology of the Weil restriction $\mathrm{Res}_{\mathbb{C}/\mathbb{R}}\mathbb{G}(V)$. More generally, this holds for the space of $W$ for which (a) the $\mathrm{G}_W$-action factors through its abelianization, or (b) the summands of the $\mathrm{G}_W$-representation on $W$ (or $V$) are otherwise dimensionally constrained.

  The results generalize analogous classical generic rigidity statements useful in establishing the triviality of the classical automorphism groups of random quantum graphs in the matrix algebra $M_n$, and can be put to similar use in fully non-commutative versions of those results (quantum graphs, quantum groups). 
\end{abstract}

\noindent {\em Key words:
  Grassmannian;
  Hopf algebra;
  Tannaka reconstruction;
  Zariski topology;
  coalgebra;
  comodule;
  constructible;
  linear algebraic group

}

\vspace{.5cm}

\noindent{MSC 2020: 20G42; 16T15; 18M05; 46L67; 14L15; 18M25; 20G15; 14M15

  %20G42 Quantum groups (quantized function algebras) and their representations
  %16T15 Coalgebras and comodules; corings
  %18M05 Monoidal categories, symmetric monoidal categories
  %20G15 Linear algebraic groups over arbitrary fields
  %46L67 Quantum groups (operator algebraic aspects)
  %14L15 Group schemes
  %14M15 Grassmannians, Schubert varieties, flag manifolds
  %18M25 Tannakian categories

}

%\tableofcontents

%%%%%%%%%%%%%%%%%%%%%%%%%%%%%%%%
%%%%%%%%%%%%%%%%%%%%%%%%%%%%%%%%
\section*{Introduction}

The present note's results instantiate in a number of ways the familiar general principle that ``most'' objects of any given type exhibit ``little'' symmetry. One germane starting point is \cite{zbMATH07502493}, examining that phenomenon in the context of \emph{quantum graphs} \cite[Definition 4.3]{MR4706978} housed by the $n\times n$ matrix algebra $M_n$ as \emph{operator systems} \cite[p.9]{pls-bk} therein: self-adjoint complex subspaces $\cS\le M_n$ containing $1\in M_n$. \cite[Theorem 3.19]{zbMATH07502493} proposes one way to formalize the notion that such $\cS$ are mostly rigid, relying among other things on the auxiliary result of \cite[Lemma 3.12]{zbMATH07502493}:
\begin{itemize}[wide]
\item given a finite-dimensional (complex, say) representation of a compact group $\G$ on $V$;

\item and a subspace $W$ with
  \begin{equation}\label{eq:gww}
    \text{restriction }\G_W|_W
    =
    \left\{\id_W\right\}
    ,\quad
    \G_W:=\left\{g\in \G\ :\ gW\le W\right\},
  \end{equation}
\end{itemize}
condition \Cref{eq:gww} holds for all $W'$ in a neighborhood of $W$ in the \emph{Grassmannian} \cite[\S 5.1]{ms_nonl} $\bG(\dim W,V)$ (the statement is specialized to compact \emph{Lie} groups, but the proof is not). More is true: the set
\begin{equation*}
  \left\{W\in \bG(d,V)\ :\ \G_W|_{W} = \left\{\id_{W}\right\}\right\}
  \subseteq
  \bG(d,V)
\end{equation*}
is in fact open in the \emph{Zariski} (rather than just ordinary) topology. They are thus also ``large'' in any reasonable sense one might accord the term:
\begin{itemize}[wide]
\item having full measure in every open subset of $\bG(d,V)$ intersecting it;
  
\item or being \emph{residual} (i.e. \cite[\S 9, p.41]{oxt_meas-cat_2e_1980} the complement of a countable union of nowhere-dense sets).
\end{itemize}
Both of these follow if largeness is interpreted as algebraic geometers not uncommonly do (e.g. \cite[pp.1926, 1927, 1948]{rs_ess-dim-2}): containing a countable intersection of Zariski-open dense subsets. Either condition would, incidentally, also have sufficed for the applications of \cite[Lemma 3.12]{zbMATH07502493} in that paper.

Most of the results below are variations on this selfsame theme that the sets of $W\in \bG(d,V)$ satisfying \Cref{eq:gww} (or closely analogous conditions) are in some fashion substantial. The precise setting admits some flexibility, with the groups in question ranging from linear algebraic to quantum.

The perspective on quantum groups $\G$ as Hopf algebras $\cO(\G)$ is that of \cite[\S 13.1]{cp_qg} (and \cite[\S 2]{wor-cqg}, etc.), the correspondence being contravariant:
\begin{itemize}[wide]
\item a Hopf algebra $\cO(\G)$ is to be regarded as algebra of particularly well-behaved (\emph{representative}: \cite[Chapter VI, Definition V.1]{chev_lie-bk_1946}, \cite[\S 27.7]{hr-2}) functions on the otherwise non-existent quantum group $\G$;

\item and $\G$ is referred to as a \emph{compact quantum} group \cite[p.65, Definition]{tim} if $\cO(\G)$ is a \emph{CQG algebra} in the sense of \cite[Definition 2.2]{dk_cqg}: a complex \emph{Hopf $*$-algebra} \cite[\S 1, p.316]{dk_cqg} whose finite-dimensional comodules
  \begin{equation*}
    V \ni
    v
    \xmapsto{\quad}
    v_0\otimes v_1
    \in
    V\otimes \cO(\G)
  \end{equation*}
  (\emph{Sweedler notation} \cite[\S 1.2 and post Proposition 2.0.1]{swe}) are \emph{unitarizable}. The latter phrase means that $V$ admits an inner product $\Braket{-\mid-}$ invariant under the $\G$-action in the sense of \cite[(1.8)]{dk_cqg}:
  \begin{equation}\label{eq:untrz}
    \Braket{v_0\mid w}v_1^*
    =
    \Braket{v\mid w_0}Sw_1
    ,\quad
    \forall v,w\in V.
  \end{equation}  
\end{itemize}

One goal will be to quantize the already-cited \cite[Lemma 3.12]{zbMATH07502493} along with analogues: \cite[Corollary 3.9]{zbMATH07502493}, for instance, implies that whenever the (classical) automorphism group of a single operator system in $M_n$ is abelian, it is so over a non-empty Zariski-open subset of the Grassmannian. The phrasing of \Cref{thn:qg.triv.ab} employs quantum-group vocabulary; its proof (following that of the more general \Cref{th:cqg.triv.isotr.zopen}) effects a translation more appropriate to the language and notation introduced there. 

There is a caveat to what \emph{Zariski-open} means: as we are interested in complex representations $V$ of \emph{compact} groups, the Zariski topology will not quite be the usual one on the complex Grassmannian $\bG:=\bG(d,V)$; it is rather the stronger one on the \emph{Weil restriction} $\Res_{\bC/\bR}\bG$. Per \cite[\S 4.6]{poon_rat-pts} say, in turn referring back to \cite[\S 7.6]{blr_neron}, this is the variety obtained by regarding the complex points of the Grassmannian as the real points of a scheme over $\bR$ instead.

\begin{theoremN}\label{thn:qg.triv.ab}
  Let $V$ be a finite-dimensional representation of a compact quantum group $\G$, and denote by $\G_W\le \G$ the isotropy quantum subgroup of $W\in \bG(d,V)$. The subsets
  \begin{align*}
    &\left\{
      W\in \bG(d,V)
      \ :\
      \G_W\text{ acts trivially on $W$ or $V$}
      \right\}
      \subseteq
      \bG(d,\dim V)\\
    &\left\{
      W\in \bG(d,V)
      \ :\
      \text{the $\G_W$-action on $W$ or $V$ factors through the abelianization $(\G_W)_{ab}$}
      \right\}
  \end{align*}
  are open in the Zariski topology on $\Res_{\bC/\bR}\bG(d,\dim V)$, and hence dense in the standard topology if non-empty.  \qedhere
\end{theoremN}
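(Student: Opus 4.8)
The plan is to strip away routine reductions until the statement becomes an openness assertion about a family of subspaces of a fixed finite-dimensional coalgebra indexed by $\bG:=\bG(d,V)$; openness will then follow from constructibility together with stability under generization. \emph{Reductions and set-up.} First I would replace $\G$ by the image quantum group $\G'\le\GL(V)$ of the representation, so that $A:=\cO(\G')$ is generated as a $*$-algebra by the matrix coefficients $u_{ij}$ of the coaction $\rho\colon V\to V\otimes A$; all three clauses survive this, because the $\G_W$-action on $W$ (resp.\ $V$) factors through that of the isotropy $\G'_W$ of $W$ in $\G'$, the maps $\G_W\twoheadrightarrow\G'_W$ and $(\G_W)_{ab}\twoheadrightarrow(\G'_W)_{ab}$ being compatible. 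The isotropy of $W$ is then $\cO(\G'_W)=A/I_W$, where $I_W$ is the Hopf $*$-ideal of $A$ generated by the coefficient space $J_W$ of $W\hookrightarrow V\xrightarrow{\rho}V\otimes A\twoheadrightarrow(V/W)\otimes A$. Two features of this picture recur: since $A$ is cosemisimple, $C_\beta\cap I_W\in\{0,C_\beta\}$ for every simple subcoalgebra $C_\beta\le A$ (a coalgebra map out of a simple coalgebra is injective or zero); and the conjugate-linearity of $x\mapsto x^*$, hence of $W\mapsto J_W^*$ and of $W\mapsto I_W$, is exactly why the relevant topology is that of $\Res_{\bC/\bR}\bG$ and not of $\bG$ — over $\Res_{\bC/\bR}\bG$ all of the constructions below are $\bR$-regular in $W$.

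\emph{Passage to finite-dimensional data.} Next I would translate each clause into a finite-dimensional statement detected uniformly in $W$. ``$\G'_W$ acts trivially on $V$'' means $\cO(\G'_W)=\bC$, i.e.\ $\dim V^{\G'_W}=\dim V$, where $V^{\G'_W}:=\{v\in V:\rho(v)-v\otimes 1\in V\otimes I_W\}$; ``$\G'_W$ acts trivially on $W$'' means $\dim(W\cap V^{\G'_W})=d$; the abelianization clauses add the vanishing modulo $I_W$ of the commutators and of the degree-$\le 2$ cocommutativity defects of the coefficients of $W$ (resp.\ $V$); and the ``dimensionally constrained'' variants behind \Cref{th:cqg.triv.isotr.zopen} are bounds on the dimensions of the isotypic summands of the $\G'_W$-comodule $W$ (or $V$), again bounds on ranks of explicit maps between bounded-degree pieces of $A$. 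All the data involved lie in a fixed finite-dimensional subcoalgebra $\widehat C\le A$ — coefficients of $V$ and of finitely many of its tensor powers and conjugates — and $I_W$ may be replaced throughout by the span $I_W^{(m)}$ of its generators multiplied out to length $\le m$ and closed under the (degree-non-increasing) coideal and antipode operations: a single $m$ serves for all $W$, since the ascending chain $\big(I_W^{(k)}\cap\widehat C\big)_k$ lies in the fixed space $\widehat C$ and so stabilizes within $\dim\widehat C$ steps, with a bound independent of $W$.

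\emph{Openness.} The assignment $W\mapsto J_W\mapsto I_W^{(m)}\mapsto V^{\G'_W},\dots$ is assembled from the tautological sub- and quotient-bundles on $\bG$, the fixed structure maps of $A$, and the conjugate-linear $*$, hence is given by $\bR$-regular bundle maps over $\Res_{\bC/\bR}\bG$; by Chevalley's theorem each of the loci in question is therefore \emph{constructible}, and what remains is to see each is \emph{stable under generization}. Here one uses that all the clauses are monotone in $I_W$: enlarging $I_W$ only collapses more coefficients, so it can only enlarge $V^{\G'_W}$, only make the $\G'_W$-action ``more trivial'' or ``more abelian'', and only tighten the dimensional constraints. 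It therefore suffices to show that passing to a generization of $W$ can only enlarge $I_W$ — equivalently that the generic isotropy is the smaller one, equivalently that $\dim V^{\G'_W}$ (and $\dim(W\cap V^{\G'_W})$, and the analogous ranks) jump \emph{up} generically. To prove this I would spread the relevant one-parameter degeneration over a discrete valuation ring $R$ whose generic point is a chosen generization $W_1$ of the special point $W_0$, form the $R$-family of ideals generated by the \emph{saturation} of $W\mapsto J_W$ so that the truncated family $\mathcal I^{(m)}$ is $R$-flat with generic fibre $I_{W_1}^{(m)}$ and special fibre containing $I_{W_0}^{(m)}$, and carry the conditions across by Nakayama: a containment $S\subseteq I_{W_0}^{(m)}$ makes the finitely generated $R$-module $\big(S\otimes_{\bC}R+\mathcal I^{(m)}\big)/\mathcal I^{(m)}$ vanish after reduction modulo the maximal ideal, hence vanish, so that $S\subseteq I_{W_1}^{(m)}$; the dimension and rank bounds transfer by the same device applied to cokernels of the relevant $R$-linear maps. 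I expect this last step to be the main obstacle: in the purely coalgebraic setting there is no \'etale slice theorem to supply ``generic isotropy is smaller'' for free, so one must install it by hand, which means (i) choosing the degenerating family so that the truncated isotropy ideals form a flat $R$-module — the family $W\mapsto J_W$ itself can drop rank, so one has to pass to its saturation and track which simple subcoalgebras of $\widehat C$ survive in $A/I_W$ — and (ii), dually, controlling limits of the Haar states $\widetilde h_{\G'_W}\circ(A\twoheadrightarrow A/I_W)$ on $A$, whose associated idempotents $(\id\otimes\ell)\rho$ cut out $V^{\G'_W}$. By contrast the reductions of the first two paragraphs are essentially formal.
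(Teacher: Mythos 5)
Your reductions and the constructibility half are fine (they parallel \Cref{pr:cnstrctbl}), but the step that carries all the content --- stability under generization --- is not established, and the mechanism you propose for it fails. The Nakayama/flat-family device works for a condition of the form ``$S\subseteq I_W$'' with $S$ \emph{fixed}: then $S$ lands in the special fibre of the saturated family $\mathcal I^{(m)}$ and Nakayama pushes it into the generic fibre. But every condition you need is of the form ``$S_W\subseteq I_W$'' with \emph{both} sides varying (e.g.\ $S_W=$ the coefficients of $\rho(w)-w\otimes 1$, $w\in W$, for triviality on $W$), and the flat limit of $S_{W_t}$ at the special point can strictly contain $S_{W_0}$; the hypothesis $S_{W_0}\subseteq I_{W_0}$ then tells you nothing about the containment of flat families that Nakayama would require. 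This is not a repairable technicality: your decisive step uses neither unitarity nor cosemisimplicity in a load-bearing way, so if it worked it would prove Zariski-openness for arbitrary Hopf-algebra isotropy, contradicting \Cref{ex:axb} and \Cref{res:cpct.needed}\Cref{item:res:cpct.needed:also.alg}, where the locus is a single closed point. (Your appeal to ``generization can only enlarge $I_W$'' is exactly the false statement in that example; even classically, generic isotropy is only \emph{subconjugate} to special isotropy, and only for compact groups.) A secondary gap: the uniform truncation bound $m$ does not follow from the chain $\bigl(I_W^{(k)}\cap\widehat C\bigr)_k$ lying in a finite-dimensional space --- that chain can pause and resume growing, since stabilization of $I_W^{(k)}\cap\widehat C$ does not force stabilization of $I_W^{(k)}$ inside the infinite-dimensional $A$; you would need a stratification or generic-flatness argument to get a bound independent of $W$.

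For contrast, the paper avoids semicontinuity of the isotropy ideal altogether: by Woronowicz's Tannaka--Krein duality the intertwiner algebras of $\G_W$ are the $*$-algebras $\Braket{P_W,\ A_{F,n}}$ generated by the fixed intertwiners of $\G$ together with the orthogonal projection $P_W$ (with respect to the invariant inner product --- this is where compactness enters, and why $P_W$ is only a $\Res_{\bC/\bR}$-regular function of $W$). Triviality ($\ell$-constraint) on $W$ becomes the condition that such a generated algebra compress to all of $\End(W)$ (resp.\ contain enough small-rank projections), i.e.\ a union over finitely many choices of loci where explicit polynomial expressions in $P_W$ are linearly independent --- a union of Zariski-open sets, with no generization argument needed. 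The moral difference is that the paper trades the upper-semicontinuous-looking quantity (size of the isotropy) for the lower-semicontinuous one (size of a generated algebra); your proposal attacks the upper-semicontinuous side directly, which is exactly where the obstruction sits.
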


By analogy to the classical situation, whereby a (locally) compact group is abelian precisely when its irreducible unitary representations are all 1-dimensional (noted in \cite[\S 1.7, pre Definition 1.72]{kt}, say), \emph{abelian} compact quantum groups $\G$ are those with cocommutative $\cO(\G)$. The \emph{abelianization} $\G_{ab}$ of a compact quantum group is thus the (object dual to the) largest cocommutative subcoalgebra of $\cO(\G)$ (automatically a CQG subalgebra). This is also the \emph{coreflection} \cite[post Definition 3.1.4]{brcx_hndbk-1} of $\cO(\G)$ in the category of cocommutative CQG algebras (cf. \cite[p.12, $2^{nd}$ diagram]{porst_formal-2}). 

A number of possible directions to pursue suggest themselves. On the one hand, \Cref{thn:qg.triv.ab} specializes to classical compact groups. On the other, the results for the latter can also be recovered from openness in the \emph{standard} (Hausdorff) topology (\cite[Lemma 3.12]{zbMATH07502493} again) coupled with the \emph{constructibility} of the relevant sets: recall \cite[AG, \S 1.3]{brl} that subsets of a topological space are
\begin{itemize}[wide]
\item \emph{locally closed} if open in their closure;

\item and \emph{constructible} if they are finite disjoint unions of locally closed sets. 
\end{itemize}
Openness in some sense reflects the compactness of $\G$ (see \Cref{ex:axb} and \Cref{res:cpct.needed}\Cref{item:res:cpct.needed:also.alg}) but constructibility holds in the broader context of linear-algebraic-group representations. The following simple remark is a particular instance of \Cref{pr:cnstrctbl}. 

\begin{propositionN}\label{prn:cnstrctbl}
  Let $\Bbbk$ be a field, $\G$ an affine $\Bbbk$-group and $V$ a $\G$-representation. The linear subspaces $W\le V$ whose isotropy subgroups $\G_W\le \G$ operate trivially on $W$ constitute a constructible subset of the Grassmannian of $V$.  \qedhere
\end{propositionN}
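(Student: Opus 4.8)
The plan is to exhibit the set in question as the complement of the image of a morphism of finite‑type schemes and then invoke Chevalley's theorem. Fix the dimension $d$ (one treats each $d$ separately and takes the finite union at the end), write $\bG:=\bG(d,V)$, let $\cW\subseteq V\otimes_{\Bbbk}\cO_{\bG}$ be the tautological rank‑$d$ subbundle, and form inside $\G\times\bG$ the closed subscheme $\cZ$ cut out by the requirement that $g$ and $g^{-1}$ both carry $\cW_{W}$ into itself --- concretely, the common zero locus of the two bundle maps $\cW\to(V\otimes\cO_{\G\times\bG})/\cW$ obtained by letting $g$, respectively $g^{-1}$, act on $\cW$ (pulled back to $\G\times\bG$) and then projecting onto the quotient. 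Then $\cZ$ is a closed subgroup scheme of $\G\times\bG$ over $\bG$ whose fibre over a point $W$ is the isotropy group $\G_{W}$, and since $\G$ is affine of finite type over $\Bbbk$ the structure map $\pi\colon\cZ\to\bG$ is affine and of finite type, while $\bG$ is Noetherian.

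Next, record the tautological action as a morphism $\alpha\colon\cZ\to\cE$ over $\bG$, where $\cE\to\bG$ is the total space of the fibrewise‑endomorphism bundle $\cW^{\vee}\otimes\cW$ and $\alpha$ sends $(g,W)$ to $g|_{W}$. The identity section $\mathbf 1\colon\bG\to\cE$ is a closed immersion (a section of an affine, hence separated, morphism), so $\cZ^{\circ}:=\alpha^{-1}\bigl(\cE\setminus\mathbf 1(\bG)\bigr)$ is open in $\cZ$, hence a locally closed --- in particular finite‑type over $\Bbbk$ --- subscheme of $\G\times\bG$. By construction the fibre $\cZ^{\circ}_{W}$ is the open locus inside $\G_{W}$ on which $g$ acts non‑trivially on $W$; being of finite type over the field $\kappa(W)$, it is empty exactly when it has no $\overline{\kappa(W)}$‑points, that is, exactly when $\G_{W}$ acts trivially on $W$. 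Hence
\[
  \bigl\{\,W\in\bG : \G_{W}\text{ acts trivially on }W\,\bigr\}\;=\;\bG\setminus\pi(\cZ^{\circ}),
\]
and Chevalley's theorem makes $\pi(\cZ^{\circ})$ a constructible subset of the Noetherian scheme $\bG$, whence its complement is constructible as well. Taking the union over $d$ gives the assertion for the full Grassmannian of $V$, and nothing beyond $\G$ being an affine algebraic group entered the argument.

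The one step that genuinely needs care --- and which I would regard as the crux --- is the equivalence asserting that $\cZ^{\circ}_{W}$ is empty precisely when $\G_{W}$ acts trivially on $W$. Emptiness of $\cZ^{\circ}_{W}$ sees only the reduced structure of $\G_{W}$, equivalently the action on $\overline{\kappa(W)}$‑points; this is harmless in characteristic zero --- in particular in the intended application $\Bbbk=\bC$ --- where finite‑type affine group schemes are automatically reduced. If one instead wants a genuinely scheme‑theoretic reading in characteristic $p$, triviality becomes the vanishing $\beta|_{\cZ_{W}}=0$ of the universal section $\beta:=\alpha-\mathbf 1$, a ``full‑fibre'' rather than ``empty‑fibre'' condition not covered by Chevalley; there I would argue locally, writing $\pi^{-1}(U)=\Spec B$ with $B$ of finite type over $A:=\cO(U)$ and $\beta$ an element of the finite $B$‑module $M$ of sections over $\pi^{-1}(U)$ of the pulled‑back bundle $\cW^{\vee}\otimes\cW$, and then showing that $\{\,\fp\in\Spec A : \beta\in\fp M\,\}$ is constructible by combining generic freeness of $M$ over $A$ --- on the locus where $M$ is $A$‑free, $\beta\in\fp M$ is just the simultaneous vanishing of the finitely many coordinates of $\beta$, hence a closed condition --- with Noetherian induction on the complementary closed subset. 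I expect this generic‑freeness bookkeeping to be the only real labour in that refinement.
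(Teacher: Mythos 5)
Your argument is correct and is essentially the paper's own: both proofs exhibit the complement as the image, under projection to the Grassmannian, of a locally closed incidence locus in (a bundle over) $\bG(d,V)\times \G$ and then invoke Chevalley's theorem --- the paper uses pairs of vectors $v_0\ne v_1$ with $gv_0=v_1$ inside $\bE\oplus\bE$ where you use the endomorphism bundle $\cW^{\vee}\otimes\cW$ and the complement of the identity section, a cosmetic difference. The only step the paper makes explicit that you elide is the initial reduction to $\G$ of finite type by replacing it with its image in $\GL(V)$, and your characteristic-$p$ caveat about non-reduced isotropy is not an issue in the paper's framework (Borel's), where affine $\Bbbk$-groups are reduced.
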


A side issue that is nevertheless ubiquitous throughout \cite{zbMATH07502493} is the fact that trivial actions on operator systems $\cS\le M_n$ give trivial actions on $M_n$ for the simple reason that ``most'' such operator systems generate $M_n$ as an algebra. The result is well-known and appears in many guises in the operator-algebra literature: the self-adjoint generating tuples $n$-tuples of a finite-dimensional $C^*$-algebra $A$ constitute a dense open subset of $A^n_{sa}$ (with the subscript denoting self-adjoint elements) whenever $n\ge 2$, and in fact the complement of a finite union of submanifolds (\cite[Lemma 7.2]{zbMATH07289424}, \cite[Lemma 4.9]{MR4620324}, etc.). 

We take the opportunity to record the algebraic analogue, which in turn recovers the Zariski density of the set of generating $(\ge 2)$-tuples for finite-dimensional $C^*$-algebras: \Cref{cor:gen.cast.zar.den}. 

\begin{theoremN}[\Cref{th:gen.zar.den.sep}]
  The set of generating $n$-tuples for a finite-dimensional algebra $A$ over a field $\Bbbk$ constitute a Zariski-open $\Bbbk$-defined set in $A^n$, dense if $n\ge 2$ and the algebra is separable.  \qedhere
\end{theoremN}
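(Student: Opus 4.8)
The plan is to prove the two assertions separately, since openness and $\Bbbk$-rationality of the generating locus need nothing of $A$ beyond finite-dimensionality, whereas density feeds off separability through Wedderburn theory. Throughout, ``algebra'' means unital and ``generate'' includes the unit.

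For the first assertion, fix a $\Bbbk$-basis of $A$, write $m=\dim_\Bbbk A$, and identify $A^n\cong\bA^{mn}_\Bbbk$. The one structural input is a uniform bound on word length: for $\mathbf a=(a_1,\dots,a_n)\in A^n$ the $\Bbbk$-span $B_N(\mathbf a)$ of all products of at most $N$ of the $a_i$ (the empty product $1$ included) forms an ascending chain inside the $m$-dimensional space $A$, and the moment $B_{N+1}(\mathbf a)=B_N(\mathbf a)$ the common value is already a unital subalgebra; hence $\langle\mathbf a\rangle=B_{m-1}(\mathbf a)$ uniformly in $\mathbf a$. The coordinates of each product of length $\le m-1$ are polynomials in the coordinates of $\mathbf a$ whose coefficients are built from the structure constants of $A$, hence lie in $\Bbbk$; so the matrix $M(\mathbf a)$ whose rows list the coordinate vectors of these finitely many products has $\Bbbk$-polynomial entries, and $\mathbf a$ generates $A$ exactly when $M(\mathbf a)$ has rank $m$, i.e. when one of its finitely many maximal minors is nonzero. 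That cuts out a $\Bbbk$-defined Zariski-open subset $U\subseteq\bA^{mn}_\Bbbk$.

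For density, $\bA^{mn}_\Bbbk$ is irreducible, so $U$ is dense as soon as $U\neq\emptyset$, equivalently --- the minors having $\Bbbk$-coefficients --- as soon as $U_{\ol\Bbbk}\neq\emptyset$; and since a $\Bbbk$-subspace of $A$ spanned by words in elements of $A$ remains, after $-\otimes_\Bbbk\ol\Bbbk$, the $\ol\Bbbk$-span of the same words, a tuple generates $A$ iff it generates $A\otimes_\Bbbk\ol\Bbbk$, so $U_{\ol\Bbbk}$ is the generating locus of $A_{\ol\Bbbk}$. Separability makes $A\otimes_\Bbbk\ol\Bbbk$ semisimple, hence $A\otimes_\Bbbk\ol\Bbbk\cong\prod_{i=1}^r M_{n_i}(\ol\Bbbk)$ by Wedderburn--Artin, and for $n\ge2$ it suffices to exhibit a generating \emph{pair} of this product. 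I would build one as follows: each $M_{n_i}(\ol\Bbbk)$ is generated by a pair $(d_i,c_i)$ with $d_i$ diagonal with pairwise distinct eigenvalues (available since $\ol\Bbbk$ is infinite) and $c_i$ the permutation matrix of an $n_i$-cycle, because the $d_i$-invariant subspaces of $\ol\Bbbk^{n_i}$ are precisely the coordinate subspaces, of which only $0$ and the whole space survive the transitive cycle $c_i$, so $\ol\Bbbk^{n_i}$ is a simple faithful $\langle d_i,c_i\rangle$-module, while the commutant of $\{d_i,c_i\}$ is $\ol\Bbbk$ (a diagonal matrix commuting with an $n$-cycle is scalar), and Jacobson density finishes it. Now choose the $d_i$ to have pairwise \emph{disjoint} spectra, set $x=(d_i)_i$, $y=(c_i)_i$ and $B:=\langle x,y\rangle\le\prod_i M_{n_i}(\ol\Bbbk)$: then $B$ surjects onto each factor, the standard modules $W_i:=\ol\Bbbk^{n_i}$ are simple $B$-modules, $V:=\bigoplus_i W_i$ is a faithful semisimple $B$-module, and the disjointness of spectra makes the $W_i$ pairwise non-isomorphic (an intertwiner $W_i\to W_j$ would conjugate $d_i$ to $d_j$), so $\End_B(V)\cong\ol\Bbbk^{\,r}$ and a second invocation of Jacobson density forces $B=\End_{\ol\Bbbk^{\,r}}(V)=\prod_i M_{n_i}(\ol\Bbbk)$.

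The crux --- and the only step with genuine content --- is this last assembly: a subalgebra of $\prod_i M_{n_i}$ surjecting onto every factor need not be the whole product (witness the diagonal $M_n\hookrightarrow M_n\times M_n$), so one cannot merely concatenate per-factor generators, and the disjoint-spectra choice is precisely the device that desymmetrizes isomorphic simple factors and lets the density argument close. It is also worth recording that separability cannot simply be dropped: the local algebra $\ol\Bbbk\oplus\fm$ with $\fm$ a square-zero ideal of dimension $\ge3$ admits no generating pair, since the intersection of any two-generated subalgebra with $\fm$ has dimension at most two, so a hypothesis of this kind is unavoidable in the density clause.
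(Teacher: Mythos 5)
Your proposal is correct, and its skeleton coincides with the paper's: openness comes from the fact that spanning by words of bounded length is a polynomial rank (non-vanishing of minors) condition with a uniform degree bound, and density is reduced, via openness plus irreducibility of $\bA^{mn}$ and base change to $\ol\Bbbk$ (where separability yields $A_{\ol\Bbbk}\cong\prod_i M_{n_i}(\ol\Bbbk)$ by Wedderburn), to exhibiting a single generating pair. Where you genuinely diverge is in that last construction: the paper takes one global diagonal $a=\mathrm{diag}(\lambda_j)$ with distinct nonzero entries and distinct ratios $\lambda_i\lambda_j^{-1}$, together with $b=\sum e_{ij}$ summed over all off-diagonal matrix units of $A$, the distinct-ratios condition serving (via a Vandermonde-type argument on iterated conjugation of $b$ by $a$) to isolate the individual $e_{ij}$ --- though the paper merely asserts that this pair generates. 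You instead take per-block cyclic permutations paired with block-diagonal matrices of pairwise disjoint spectra, and separate the Wedderburn factors by a module-theoretic double-commutant argument (simplicity of each $\ol\Bbbk^{n_i}$, non-isomorphism of the standard modules from disjointness of spectra, then Jacobson density). Your route has the merit of being fully verified, and your closing observation --- that surjecting onto every factor does not suffice, so some device must desymmetrize the factors --- correctly identifies the one point both constructions must address; your square-zero example matches the paper's own Remark~\ref{re:ss.nec}. The only cosmetic mismatch is that, as in the paper's own statement of the headline theorem, the density clause tacitly requires $\Bbbk$ infinite (the body of the paper's Theorem~\ref{th:gen.zar.den.sep} makes this explicit), which your argument does use when passing from non-emptiness over $\ol\Bbbk$ to density of the $\Bbbk$-points.
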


%%%%%%%%%%%%%%%%%%%%%%%%%%%%%%%%
\subsection*{Acknowledgments}

I am grateful for helpful comments, pointers and suggestions from M. Fulger, K. Goodearl, R. Kanda, S. P. Smith, P. So{\l}tan, V. Srinivas, D. \c{S}tefan, H. Thiel and M. Wasilewski.

% % %%%%%%%%%%%%%%%%%%%%%%%%%%%%%%%%
% % %%%%%%%%%%%%%%%%%%%%%%%%%%%%%%%%
% % \section{Preliminaries}\label{se:prel}
% %

%%%%%%%%%%%%%%%%%%%%%%%%%%%%%%%%
%%%%%%%%%%%%%%%%%%%%%%%%%%%%%%%%
\section{Generic trivial (quantum) actions}\label{se:gen.triv.qact}

We need some algebraic-geometry background, and refer the reader to \cite[Chapter AG]{brl} for most of it. Some of the specific notions employed below include
\begin{itemize}[wide]
\item the \emph{Zariski topology} \cite[AG \S 3.4]{brl} on the underlying topological space of a \emph{$\bK$-scheme} \cite[AG \S 3.4]{brl} for an algebraically closed field $\bK$;

\item \emph{$\Bbbk$-structures} and the \emph{$\Bbbk$-topology} \cite[AG \S 11.3]{brl} on the same objects, for subfields $\Bbbk\le \bK$, weaker than the aforementioned Zariski ($\bK$-)topology;

\item and downstream from that, subschemes \emph{defined over} \cite[AG \S 11.4]{brl} the subfield $\Bbbk\le \bK$.

  For issues of \emph{rationality} (i.e. being $\Bbbk$-closed or defined) see also \cite[\S 34.1]{hmph_lin-alg-gps_1981}. 
\end{itemize}
We refer the reader to \cite[\S I.2]{jntz}, \cite[\S II.2.2]{dg_gp-alg} or \cite[\S 5.4]{dmdts_lalg-gps} for basic background on $\Bbbk$-representations of affine $\Bbbk$-groups $\G$ (for a field $\Bbbk$). The Zariski openness of \Cref{eq:gww} will be a consequence of openness in the standard topology and the type of constructibility result \Cref{pr:cnstrctbl} and (the proof of) \Cref{pr:triv.act.zar.op} below provide.

% % For a family $\{W\}$ of subspaces of $V$ and a $\G$-representation on $V$ we write $\G_{\{W\}}$ for the subgroup of $\G$ leaving each $W\in \{W\}$ invariant. We occasionally drop braces, as in $\G_{W}:=\G_{\{W\}}$. 
% %

%\newpage

\begin{proposition}\label{pr:cnstrctbl}
  Let $\G$ be an affine $\Bbbk$-group for a field $\Bbbk$, and $V$ a finite-dimensional $\G$-representation over $\Bbbk$, and $\psi$ a $\Bbbk$-defined scheme morphism
  \begin{equation*}
    \bG(d,V)
    \xrightarrow{\quad\psi\quad}
    \bG(V)
    :=
    \bigcup_{d'}\bG(d',V).
  \end{equation*}
  The set
  \begin{equation}\label{eq:triv.gw.ws}
    \bG_{\psi}
    :=
    \left\{W\in \bG(d,V)\ :\ \G_W|_{\psi(W)} = \left\{\id_{\psi(W)}\right\}\right\}
    \subseteq
    \bG(d,V)
  \end{equation}
  is constructible.
\end{proposition}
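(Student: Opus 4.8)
The plan is to realise $\bG_{\psi}$ as the complement of the image of a locally closed subset under a projection, and then to appeal to Chevalley's theorem on images of morphisms (e.g.\ \cite[AG, \S 10]{brl}). Throughout we take $\G$ of finite type over $\Bbbk$, as is the convention for ``$\Bbbk$-groups'' in \cite{brl}; if $\G$ is allowed to be of infinite type, one first replaces it by the finite-type quotient group through which its action on $V$ — hence on $\bG(d,V)$ and $\bG(V)$ — factors, which changes neither the isotropy schemes $\G_{W}$ up to faithfully flat base change nor the set $\bG_{\psi}$.

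Work over $X:=\G\times\bG(d,V)$. On $\bG(d,V)$ let $\cW\subseteq V\otimes\cO$ be the tautological subbundle, with $\cW|_{W}=W$, and set $\cP:=\psi^{*}\cT$, where $\cT\subseteq V\otimes\cO$ is the tautological subsheaf on $\bG(V)=\bigcup_{d'}\bG(d',V)$; since $\cT$ is locally a direct summand of $V\otimes\cO$ and direct summands pull back to direct summands, $\cP$ is a subbundle of $V\otimes\cO$ with $\cP|_{W}=\psi(W)$. The $\G$-representation on $V$ is the same datum as an $\cO_{\G}$-linear automorphism $a$ of $V\otimes_{\Bbbk}\cO_{\G}$ whose specialisation at a point $g\in\G$ is the operator through which $g$ acts on $V$; pull $a$, $\cW$ and $\cP$ back to $X$ along the evident projections.

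Let $\Gamma\subseteq X$ be the locus where $a$ carries $\cW$ into itself, that is, the vanishing locus of the composite $\cW\xrightarrow{\,a\,}V\otimes\cO\twoheadrightarrow(V\otimes\cO)/\cW$; and let $\Gamma_{0}\subseteq\Gamma$ be the locus inside $\Gamma$ where $a$ moreover restricts to the identity on $\cP$, i.e.\ the vanishing locus of $\cP\xrightarrow{\,a-\id\,}V\otimes\cO$. Since the locus where a morphism of vector bundles vanishes is closed, $\Gamma$ and $\Gamma_{0}$ are closed subschemes of $X$, their scheme-theoretic fibres over a point $W\in\bG(d,V)$ being respectively the isotropy group scheme $\G_{W}$ and its closed subscheme $\{g\in\G_{W}:g|_{\psi(W)}=\id\}$. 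As $\Gamma_{0}$ is closed in $\Gamma$ and $\Gamma$ is closed in $X$, the difference $\Gamma\setminus\Gamma_{0}$ is locally closed, hence constructible, in $X$.

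Finally, let $p:X\to\bG(d,V)$ be the second projection — a morphism of $\Bbbk$-schemes of finite type. By Chevalley's theorem the image $p(\Gamma\setminus\Gamma_{0})$ is constructible, and it is $\Bbbk$-defined since all of the above data are. A point $W$ lies in it exactly when the fibre $(\Gamma\setminus\Gamma_{0})_{W}$ is non-empty, i.e.\ — that fibre being of finite type over $\kappa(W)$ — exactly when over a geometric point there is some $g\in\G_{W}$ that does not fix $\psi(W)$ pointwise; in other words, exactly when $\G_{W}$ fails to act trivially on $\psi(W)$ in the sense of \Cref{eq:triv.gw.ws}. Therefore $\bG_{\psi}=\bG(d,V)\setminus p(\Gamma\setminus\Gamma_{0})$ is constructible, as desired. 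The only step demanding real care is the verification that the conditions cutting out $\Gamma$ and $\Gamma_{0}$ are closed — the standard fact just quoted about vanishing loci of bundle maps, once the relevant sheaves are written down; identifying the fibres of $\Gamma$ and $\Gamma_{0}$ with the stated group schemes is routine, and Chevalley's theorem then delivers the conclusion with no further work.
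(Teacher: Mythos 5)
Your argument is correct and follows essentially the same route as the paper's: reduce to $\G$ of finite type acting on $V$, exhibit the complement of $\bG_{\psi}$ as the image under projection to $\bG(d,V)$ of a locally closed locus of pairs $(g,W)$ with $g$ stabilizing $W$ but acting nontrivially on $\psi(W)$, and invoke Chevalley's theorem. The only (cosmetic) difference is that you cut out this locus as a difference of vanishing loci of vector-bundle maps on $\G\times\bG(d,V)$, whereas the paper parametrizes it by explicit witness vectors via the incidence correspondence $\bE\oplus\bE$; both are fine.
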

\begin{proof}
  We may as well assume $\G$ of \emph{finite type} \cite[Definition post Proposition II.3.2]{hrt} over $\Bbbk$, by substituting for it its (automatically closed, $\Bbbk$-defined \cite[Corollary 1.4(a)]{brl}) image through $\G\to \GL(V)$. 
  
  We will exhibit \Cref{eq:triv.gw.ws} (or rather its complement) as a the image of a constructible set through morphisms of sufficiently well-behaved schemes (of finite type over $\Bbbk$). This will suffice by the celebrated \emph{Chevalley theorem} (e.g. \cite[Th\'eor\`eme IV.1.8.4]{ega41} or \cite[Exercise II.3.19]{hrt}).

  We write
  \begin{equation*}
    \begin{tikzpicture}[>=stealth,auto,baseline=(current  bounding  box.center)]
      \path[anchor=base] 
      (0,0) node (l) {$\bG(d,V)$}
      +(5,.5) node (u) {$\bE:=\left\{(W,v)\ :\ v\in \psi(W)\right\}\subseteq \bG(d,V)\times V$}
      +(10,0) node (r) {$V$}
      ;
      \draw[->>] (u) to[bend right=6] node[pos=.5,auto,swap] {$\scriptstyle \pi$} (l);
      \draw[->] (u) to[bend left=6] node[pos=.5,auto] {$\scriptstyle $} (r);
    \end{tikzpicture}
  \end{equation*}
  for the \emph{incidence correspondence} attached naturally to $\psi$; $\pi$ is also a \emph{$d'$-bundle} \cite[p.10]{3264} over $\bG(d,V)$ for the common dimension $d'$ of all $\psi(\bullet)$: the pullback through $\psi$ of the \emph{canonical subbundle} \cite[\S 3.2.3]{3264} on $\bG(d',W)$. We can then also speak of $\bE\oplus \bE$, the direct sum of vector bundles over $\bG(d,V)$. 

  To conclude, simply observe that the complement of \Cref{eq:triv.gw.ws} is the image through the first projection $\bE\oplus \bE\xrightarrowdbl{}\bG(d,V)$ of
  \begin{equation*}
    \left\{
      \left(W,v_0,v_1,g\right)
      \in
      \left(\bE\oplus \bE\right)\times \G
      \ :\
      gW=W
      \ \wedge\
      v_0\ne v_1
      \ \wedge\
      gv_0=v_1
    \right\}.
  \end{equation*}
  The latter set plainly being locally closed in $\left(\bE\oplus \bE\right)\times \G$, we are done. 
\end{proof}

We revisit \cite[Lemma 3.12]{zbMATH07502493}, leveraging its openness claim into the stronger Zariski openness, recalling that the appropriate interpretation of the phrase means taking Weil restrictions.

\begin{proposition}\label{pr:triv.act.zar.op}
  Let $\G\xrightarrow{\rho}\GL(V)$ be a finite-dimensional complex representation of a compact group. For any integer $0\le d\le \dim V$ the sets \Cref{eq:triv.gw.ws} are Zariski-open in $\Res_{\bC/\bR}\bG(d,V)$ and hence also dense whenever non-empty.
\end{proposition}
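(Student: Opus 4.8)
The plan is to combine the standard-topology openness of \cite[Lemma 3.12]{zbMATH07502493} with a constructibility statement of the type furnished by \Cref{pr:cnstrctbl}, and then to upgrade ``constructible $+$ standard-open'' to ``Zariski-open on the Weil restriction'' — this last upgrade being the place where compactness of the group is genuinely used, and where I expect the real work to lie. (Structurally this parallels the proof of the more general \Cref{th:cqg.triv.isotr.zopen}, which I would set up first and then specialize.) First I would unitarize: an invariant Hermitian form lets me take $\rho(\G)\le\U(V)$, and a compact subgroup of $\GL(V)(\bC)=\left(\Res_{\bC/\bR}\GL(V)\right)(\bR)$ is automatically a real algebraic subgroup of $\Res_{\bC/\bR}\GL(V)$, since a compact Lie group carries a canonical real-algebraic structure through which every continuous homomorphism is a morphism. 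Thus, although $\G$ itself is not algebraic, $\Gamma:=\rho(\G)$ is an affine $\bR$-group with $\G_W=\Gamma_W$ for all $W$, and $\bG_\psi$ of \Cref{eq:triv.gw.ws} is precisely the set governed, for $\Gamma$, by \Cref{pr:cnstrctbl}. Rerunning that proof — the incidence-correspondence construction followed by Chevalley's theorem — over $\Bbbk=\bR$, with the finite-type $\bR$-scheme $\Res_{\bC/\bR}\bG(d,V)$ in place of $\bG(d,V)$ and $\Gamma$ in place of $\G$, then shows $\bG_\psi$ is constructible in the Zariski topology of $\Res_{\bC/\bR}\bG(d,V)$: writing $A=\{(g,W):gW=W\}$ and $B=\{(g,W)\in A:\ g|_{\psi(W)}=\id\}$, both $\bR$-Zariski-closed in $\Gamma\times\Res_{\bC/\bR}\bG(d,V)$, the complement $C$ of $\bG_\psi$ is the image of the locally closed set $A\setminus B$ under the projection to $\Res_{\bC/\bR}\bG(d,V)$, hence constructible.

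Second, I would check that $C$ is closed in the standard topology. For $\psi=\id$ this is the openness claim of \cite[Lemma 3.12]{zbMATH07502493}, and its proof adapts to the other $\psi$ of interest (for instance the constant map onto $V$); I would also re-derive it directly, as the place where compactness enters: given a convergent net $W_\nu\to W$ with $W_\nu\in C$, choose $g_\nu\in\Gamma_{W_\nu}$ and unit vectors $v_\nu\in\psi(W_\nu)$ with $g_\nu v_\nu\ne v_\nu$; compactness of $\Gamma$ and of the unit sphere of $V$ yields a subnet with $g_\nu\to g\in\Gamma_W$ and $v_\nu\to v\in\psi(W)$ of norm $1$. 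Were $W$ in $\bG_\psi$, the standard-openness of $\bG_\psi$ would force $W_\nu\in\bG_\psi$ eventually, contradicting $g_\nu v_\nu\ne v_\nu$; hence $W\in C$.

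The remaining — and decisive — step is to conclude that this constructible, standard-closed set $C$ is Zariski-closed in $\Res_{\bC/\bR}\bG(d,V)$, so that $\bG_\psi$ is Zariski-open. This is not a formal consequence over $\bR$: a constructible, standard-closed subset of even a smooth proper real variety can fail to be Zariski-closed, and for a general linear algebraic $\G$ the openness conclusion of the proposition does fail (cf.\ \Cref{ex:axb} and \Cref{res:cpct.needed}). What must be exploited is the group-theoretic shape of $C$: it is $\Gamma$-stable, so its Zariski closure $Z$ is a $\Gamma$-stable closed subscheme, and it is assembled from the smooth fixed-point subschemes $\bG(d,V)^g$ as $g$ ranges over the compact $\Gamma$, of which only finitely many ``combinatorial types'' occur. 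The aim is to leverage this — through a strong-properness/closed-orbit argument over the compact $\Gamma$ together with the finiteness of types — to show that $C$ already exhausts $Z(\bR)$, i.e.\ that no extra real point of $Z$ (in particular no isolated real point lying in $\bG_\psi$) can escape $C$; that is exactly the failure mode precluded by compactness, and it is the part I expect to demand the most care. Once $\bG_\psi$ is Zariski-open, density in the standard topology whenever $\bG_\psi\ne\varnothing$ is automatic, since $\Res_{\bC/\bR}\bG(d,V)$ is irreducible with Zariski-dense set of real points.
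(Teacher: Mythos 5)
Your architecture --- constructibility via Chevalley, standard-topology openness via \cite[Lemma~3.12]{zbMATH07502493}, then an upgrade to Zariski openness on the Weil restriction --- matches the paper's, and the first two steps are essentially fine (the paper's constructibility argument differs in detail: it uses the finiteness of isotropy types of a compact Lie group action \cite[Proposition~I.2.4]{audin} to realize the complement as the projection of a Zariski-\emph{closed}, rather than merely locally closed, subset of $\bG\times\G$; also, your promised ``direct re-derivation'' of standard closedness of $C$ is circular, since it invokes the standard openness of $\bG_\psi$ to conclude $W_\nu\in\bG_\psi$ eventually). The problem is the third step, which you yourself flag as the part demanding the most care: you never actually prove it. ``The aim is to leverage [\dots] through a strong-properness/closed-orbit argument over the compact $\Gamma$ together with the finiteness of types'' is a plan, not an argument, and as written the decisive implication --- constructible and Hausdorff-open implies Zariski-open --- is left as a stated goal. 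This is a genuine gap: the proposition is not proved.

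The paper closes exactly this gap with \Cref{le:count.un.constr} (extending \cite[Lemma~6.3.3]{zbMATH03382687}): a countable union of constructible subsets of a real variety that is open in the Hausdorff topology is Zariski-open, proved by Baire category plus noetherian induction, with no reference to any group action. Your diagnosis of where compactness enters is therefore also off: compactness is consumed entirely in the first two steps (finiteness of isotropy types for constructibility; \cite[Lemma~3.12]{zbMATH07502493} for Hausdorff openness --- which is precisely what fails for the $ax+b$ group of \Cref{ex:axb}, whose good set is already not Hausdorff-open), and the final upgrade is group-free. Your instinct that this upgrade is delicate over $\bR$ is not baseless --- a Hausdorff-closed constructible subset of a \emph{singular} real variety can fail to be Zariski-closed (e.g.\ a real nodal cubic minus its isolated real point), so the Baire/noetherian induction does require some care about the real loci encountered --- but either way you must supply a proof of a statement of the type of \Cref{le:count.un.constr}, or else carry out your properness argument in full, and you have done neither.
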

\begin{proof}
  There is no loss in assuming $\G$ Lie throughout, for $\rho$ in any case factors through a Lie quotient (e.g. the image $\rho(\G)$, a closed and hence \cite[Theorem 20.12]{lee2013introduction} Lie subgroup of $\GL$). Observe furthermore that given the \emph{irreducibility} \cite[Theorem 5.4]{ms_nonl} of $\bG:=\bG(d,V)$, density does indeed follow from Zariski openness and non-emptiness.

  Constructibility follows not quite from \Cref{pr:cnstrctbl}, but rather from a parallel argument. The possible $\G_W$ range over finitely many subgroups of $\G$ up to conjugation \cite[Proposition I.2.4]{audin}, so whenever $g\in \G_W$ fails to operate trivially on $W$ the spectrum of $g|_W$ must intersect some fixed finite subset $F\subset \bC\setminus\{1\}$. The complement of \Cref{eq:triv.gw.ws} is thus the image of 
  \begin{equation*}
    \left\{(W,g)\ :\ gW=W\ \wedge\ \det\prod_{\lambda\in F}(g-\lambda)|_W=0\right\}
    \subseteq
    \bG\times \G
  \end{equation*}
  through the first projection. That set is Zariski-closed in $\Res_{\bC/\bR}\bG\times \G$, having given $\G$ its real algebraic structure inherited from its embedding $\G<\G_{\bC}$ into the complex linear algebraic group attached to it \cite[VI, Definition VIII.3]{chev_lie-bk_1946}.

  % % OLD: BEFORE GENERALIZING \cite[Lemma 6.3.3]{zbMATH03382687} TO COUNTABLE UNIONS
  % % 
  % % by \cite[Lemma 6.3.3]{zbMATH03382687}, say: that source assumes all varieties complex, but the proof of the cited result goes through fine for real points of real varieties.
  % % 
  
  Openness in the standard topology is provided by \cite[Lemma 3.12]{zbMATH07502493}, whence Zariski openness (given constructibility) by \Cref{le:count.un.constr} below, slightly generalizing \cite[Lemma 6.3.3]{zbMATH03382687}.
\end{proof}

For completeness and good measure, we record the following extension of \cite[Lemma 6.3.3]{zbMATH03382687}: that source assumes varieties complex and the result in question applies to constructible sets rather than countable unions of such. 

\begin{lemma}\label{le:count.un.constr}
  A countable union $U\subseteq X$ of constructible subsets of a real algebraic variety $X\subseteq \bR\bP^n$ open in $X$ in the Hausdorff topology is Zariski-open. 
\end{lemma}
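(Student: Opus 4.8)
The plan is to pass to complements and induct on $\dim X$. Set $Z:=X\setminus U$. Since the complement of a constructible set is constructible, the hypothesis makes $Z=\bigcap_i D_i$ a countable \emph{intersection} of constructible subsets $D_i\subseteq X$, and $Z$ is Hausdorff-closed; the goal is then to prove that $Z$ is Zariski-closed. I would note at the outset that $X$, being Zariski-closed in the compact space $\bR\bP^n$, is compact Hausdorff, hence a Baire space in its Hausdorff topology — and the same applies to every subvariety appearing below.

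First I would reduce to $X$ irreducible (the base case $\dim X=0$, where $X$ is finite and its Zariski topology is discrete, being trivial). Decompose $X=\bigcup_j X_j$ into its finitely many irreducible components; then $Z\cap X_j=\bigcap_i(D_i\cap X_j)$ is again a countable intersection of constructible subsets of $X_j$, Hausdorff-closed in $X_j$. For the components of dimension $<\dim X$ the induction hypothesis gives that $Z\cap X_j$ is Zariski-closed, so it is enough to prove the lemma for irreducible $X$, after which $Z=\bigcup_j(Z\cap X_j)$ is a finite union of Zariski-closed sets.

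For $X$ irreducible I would split into two cases. If some $D_{i_0}$ is not Zariski-dense, then $Z\subseteq D_{i_0}\subseteq Y:=\overline{D_{i_0}}^{\,\mathrm{Zar}}\subsetneq X$, a proper closed subvariety with $\dim Y<\dim X$; since $Z=\bigcap_i(D_i\cap Y)$ exhibits $Z$ as a countable intersection of constructible subsets of $Y$ that is Hausdorff-closed in $Y$, the induction hypothesis applies and $Z$ is Zariski-closed. Otherwise every $D_i$ is Zariski-dense in $X$, hence — being constructible — contains a Zariski-open dense subset $O_i\subseteq X$, so that $Z\supseteq\bigcap_i O_i$. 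Each $X\setminus O_i$ is a proper Zariski-closed subset of the irreducible variety $X$; granting that such a set is Hausdorff-nowhere-dense, the Baire category theorem makes $\bigcap_i O_i$ Hausdorff-dense, whence $Z$, being Hausdorff-closed and containing it, equals $X$ and is in particular Zariski-closed.

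The step I expect to be the real obstacle is precisely the Hausdorff-nowhere-density of a proper Zariski-closed subset of an irreducible real variety: unlike over $\bC$, a real variety can carry lower-dimensional Hausdorff-open pieces (isolated points, say), so this needs an extra hypothesis or input on $X$. For the situation of \Cref{pr:triv.act.zar.op} it is automatic: there $X=\Res_{\bC/\bR}\bG(d,V)$ has for its $\bR$-points the connected complex — hence real-analytic — manifold $\bG(d,V)(\bC)$, on which a polynomial vanishing on a nonempty Hausdorff-open set vanishes identically. In general one restricts to varieties with this property (e.g. $X$ smooth with connected real locus). The remaining ingredients — the structure theory of constructible sets and the Baire category argument, the latter being exactly what promotes the single-constructible case to a countable union — are as in \cite[Lemma 6.3.3]{zbMATH03382687}.
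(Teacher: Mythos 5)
Your argument is correct and rests on the same three ingredients as the paper's proof --- Baire's theorem, the fact that a constructible set contains a Zariski-open dense subset of its Zariski closure, and induction on dimension --- just reorganized: you pass to complements, reduce to irreducible $X$, and split on whether all the constructible pieces are Zariski-dense, whereas the paper applies Baire directly to $U=\bigcup_n C_n$ to extract a dense Zariski-open $U_0\subseteq U$ and then inducts with $X\setminus U_0$ in place of $X$. Neither organization buys anything essential over the other.

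The obstacle you flag is genuine, and it is present --- silently --- in the paper's proof as well: the step from ``$\overline{C_n}^{Z}$ has non-empty (Hausdorff) interior'' to ``$C_n$ contains a non-empty Zariski-open subset of $X$'' requires exactly the fact that a proper Zariski-closed subset of (a component of) $X$ cannot contain a non-empty Hausdorff-open set. As you suspect, this can fail for real varieties, and with it the lemma as literally stated: for the irreducible real cubic $y^2=x^2(x-1)$ in $\bR\bP^2$, the origin is an isolated point of the real locus, so the singleton it forms is constructible and Hausdorff-open in $X$ yet not Zariski-open (non-empty Zariski-open subsets of an irreducible curve are cofinite). So the statement needs a supplementary hypothesis of the kind you name --- e.g.\ that every irreducible component of $X$ has real locus a manifold of full dimension, or more simply that $X$ is (the real locus of) a Weil restriction of a smooth complex variety or a compact Lie group with its real-algebraic structure --- and your observation that this is automatic in the only places the lemma is invoked (\Cref{pr:triv.act.zar.op}) is exactly the right repair. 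Flagging this explicitly is an improvement on the paper's own write-up.
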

\begin{proof}
  We proceed by induction on the dimension of $X$, the base case $\dim V=0$ being trivial. \emph{Baire's theorem} \cite[Theorem 24.12 and Corollary 25.4(b)]{wil_top} applied to the countable union
  \begin{equation*}
    U = \bigcup_{n=0}^{\infty} C_n
    ,\quad
    C_n\text{ constructible}
  \end{equation*}
  implies that some Zariski closure $\overline{C_n}^{Z}$ must have non-empty interior. All varieties in sight are \emph{noetherian} in the sense of \cite[AG \S 1.2]{brl}, so $C_n$ contains some Zariski-open subset $W\subseteq X$ by \cite[AG \S 1.3, Proposition]{brl}. This argument goes through so long as the union of the $W$ thus obtained is not dense in $X$.
  
  On the other hand, by noetherianness again, the union $\bigcup_W W$ will eventually stabilize. It follows that $U$ does contain a Zariski-open dense subset $U_0\subseteq X$, and induction then takes over with $X\setminus U_0$ in place of $X$.
\end{proof}

The openness of the set of spaces $W$ satisfying \Cref{eq:gww} is a manifestation of sorts of the compactness of $\G$. Per \Cref{ex:axb}, that set need not be either open or in any sense ``large'' in general.

\begin{example}\label{ex:axb}
  Consider the group $\G:=\left(\bR,+\right)\rtimes \left(\bR_{>0},\cdot\right)$ (the second semidirect factor acting on the first by scaling) operating on $\bC^2$ via the realization
  \begin{equation*}
    \begin{pmatrix}
      1&b\\
      0&a
    \end{pmatrix}
    ,\quad
    a\in \bR_{>0}
    ,\quad
    b\in \bR
  \end{equation*}
  (the familiar \emph{$ax+b$ group} \cite[5. post Proposition 2.21]{folland}). The isotropy group of the line $\ell$ operates on it non-trivially with the single exception of $\ell_0:=\bC
  \begin{pmatrix}
    1\\0
  \end{pmatrix}
  $, so the set of $W$ (here $\ell$) satisfying \Cref{eq:gww} is a singleton in $\bC\bP^1=\bG(1,\bC^2)$. Indeed, this is so for $\ell_1:=\bC
  \begin{pmatrix}
    0\\1
  \end{pmatrix}
  $ because its isotropy group is the second factor $\bR_{>0}$, and all lines $\ell\ne \ell_0$ constitute a single orbit under the action of $\G$ on $\bC\bP^1$. 
\end{example}

\begin{remarks}\label{res:cpct.needed}
  \begin{enumerate}[(1),wide]
  \item\label{item:res:cpct.needed:also.alg} \Cref{ex:axb} adapts easily to an algebraic counterpart thereof, with the Zariski topology in place of the usual Euclidean one and the linear algebraic group $\G_a\rtimes \G_m$ doing the acting ($\G_a$ and $\G_m$ denoting, as usual \cite[Examples I.1.6(1) and (3)]{brl}, the additive and multiplicative 1-dimensional linear algebraic groups).

  \item\label{item:res:cpct.needed:weil} Note also that in \Cref{pr:triv.act.zar.op} requiring openness only in the stronger topology underlying the Weil restriction to $\bR$ is not gratuitous: it is \emph{not} the case, in general, that \Cref{eq:triv.gw.ws} are Zariski-open in the more restrictive sense in that result's context. \Cref{le:psl2vn} below (a gloss on \cite[Example 12.1]{zbMATH03382687}) demonstrates this. 
  \end{enumerate}  
\end{remarks}

\Cref{le:psl2vn} below refers to the $(2n+1)$-dimensional irreducible representation $V_{2n}$ of $\PSL(2)$, i.e. \cite[\S 11.1, p.150]{fh_rep-th} the $n^{th}$ symmetric power $S^n \bC^2$ of the usual $\SL(2)$-representation on $\bC^2$. $V_{2n}$ can be regarded \cite[\S 11.3]{fh_rep-th} as the space of homogeneous degree-$2n$ polynomials in $x,y$.

\begin{lemma}\label{le:psl2vn}
  Let $n\in \bZ_{\ge 2}$, $\G:=\PSU(2)<\PSL(2)$, and set $\bP:=\bP V_{2n}$.

  The collection of lines $\ell\le V_{2n}$ with $\G_{\ell}|_{\ell}=\{1\}$ is Zariski-open in $\Res_{\bC/\bR}\bP$ but not in $\bP$. 
\end{lemma}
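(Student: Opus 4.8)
The plan is to treat the two halves of \Cref{le:psl2vn} separately. The openness of $\{\ell : \G_\ell|_\ell=\{1\}\}$ in $\Res_{\bC/\bR}\bP$ is the case $d=1$, $\psi=\id$ of \Cref{pr:triv.act.zar.op} for the compact group $\G=\PSU(2)$ and the representation $V_{2n}$, so there is nothing further to do there. For the failure of openness in $\bP$ itself, I would work with the complement
\[
  B:=\left\{\ell\in\bP\ :\ \G_\ell|_\ell\ne\{1\}\right\}
\]
and show that $B$, though $\PSU(2)$-invariant, is \emph{not} $\PSL(2)$-invariant; since every Zariski-closed $\PSU(2)$-invariant subset of $\bP=\bP V_{2n}$ is automatically $\PSL(2)$-invariant, $B$ cannot be Zariski-closed in $\bP$, i.e.\ its complement is not Zariski-open there.

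The automatic invariance is the usual averaging argument: if $Z\subseteq\bP$ is Zariski-closed then $\{g\in\PSL(2):gZ=Z\}$ is Zariski-closed --- it is $\{g:gZ\subseteq Z\}=\bigcap_{z\in Z}\{g:gz\in Z\}$ intersected with the image of that set under inversion --- hence a Zariski-closed subgroup; if $Z$ is $\PSU(2)$-stable this subgroup contains $\PSU(2)$ and therefore its Zariski closure, which is all of $\PSL(2)$ because a compact real form is Zariski-dense in its complexification. That $B$ is $\PSU(2)$-invariant is clear from $\G_{h\ell}=h\G_\ell h^{-1}$ for $h\in\PSU(2)$, conjugation by $h$ identifying the action on $h\ell$ with the action on $\ell$.

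The substantive step is to exhibit one $\ell_0\in B$ and one $g\in\PSL(2)$ with $g\ell_0\notin B$. I would take $\ell_0:=\bC f_0$ with $f_0:=x^{2n}+y^{2n}$. The diagonal element $g_0:=\diag(\zeta,\zeta^{-1})$, $\zeta:=e^{i\pi/2n}$, lies in $\PSU(2)$ and is nontrivial in $\PSL(2)$ (as $2n\ge4$), and it multiplies both $x^{2n}$ and $y^{2n}$ by $\zeta^{2n}=\zeta^{-2n}=-1$, so $g_0f_0=-f_0$; thus $g_0\in\G_{\ell_0}$ acts on $\ell_0$ by $-1\ne1$ and $\ell_0\in B$. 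On the other hand $\PSL(2)_{\ell_0}$ is finite: it permutes the $2n$ pairwise-distinct roots $\{[x:y]:x^{2n}=-y^{2n}\}$ of $f_0$, and since a M\"obius transformation fixing three points is the identity, the resulting homomorphism $\PSL(2)_{\ell_0}\to\mathrm{Sym}(\text{roots})$ is injective. Finally, conjugating a finite subgroup $H\le\PSL(2)$ by a generic element moves it off $\PSU(2)$: a nontrivial $h\in H$ is semisimple, non-central, of finite order, and the locus $\{g:ghg^{-1}\in\PSU(2)\}$ is swept out as $c$ ranges over the real-$2$-dimensional set of $\PSU(2)$-elements $\PSL(2)$-conjugate to $h$ and $g$ over the coset $\{g:ghg^{-1}=c\}$, a coset of the real-$2$-dimensional centralizer $Z_{\PSL(2)}(h)$, so it has real dimension $\le4<6=\dim_\bR\PSL(2)$, and a finite union of such loci stays proper. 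Hence for generic $g$ --- and, as one checks, for an explicit unipotent $g$ --- one has $g\,\PSL(2)_{\ell_0}\,g^{-1}\cap\PSU(2)=\{1\}$, whence $\G_{g\ell_0}=\{1\}$ acts trivially on $g\ell_0$ and $g\ell_0\notin B$; incidentally this also shows the set in the statement is nonempty.

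The step I expect to be the main obstacle is this last one: choosing a workable bad form, checking it is bad (a one-line eigenvalue computation), verifying its $\PSL(2)$-stabilizer is finite, and making precise the claim that a generic $\PSL(2)$-conjugate of a finite subgroup meets $\PSU(2)$ only in the identity. The remaining ingredients --- the appeal to \Cref{pr:triv.act.zar.op}, the averaging argument, and the formal deduction that a $\PSU(2)$-invariant but not $\PSL(2)$-invariant subset of $\bP$ cannot be Zariski-closed --- are routine.
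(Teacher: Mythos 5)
Your proof is correct, but it takes a genuinely different route from the paper's. For the negative half the paper argues by direct exhibition: it restricts to the locus of lines $\ell_{p,q}:=\bC p^{2n-1}q$ ($p\ne q$ linear forms), identifies that locus with the variety of rank-one idempotents on $\bC^2$ (image $\bC p$, kernel $\bC q$), and observes that whether $\G_{\ell_{p,q}}|_{\ell_{p,q}}=\{1\}$ is governed by whether $q$ is orthogonal to $p$ for the invariant inner product --- a condition cut out by conjugate-linear equations, hence Zariski-closed only after Weil restriction; were the good set open in $\bP$ itself, its trace on this complex subvariety would be complex-Zariski-open, which it is not. You instead argue by symmetry: the bad set $B$ is $\PSU(2)$-stable; any $\PSU(2)$-stable Zariski-closed subset of $\bP$ is $\PSL(2,\bC)$-stable because the compact real form is Zariski-dense in its complexification; and $B$ is not $\PSL(2)$-stable, since $\bC(x^{2n}+y^{2n})\in B$ (via $\diag(e^{i\pi/2n},e^{-i\pi/2n})$ acting by $-1$) while a generic $\PSL(2)$-translate of that line has trivial $\PSU(2)$-isotropy, its full $\PSL(2)$-stabilizer being finite. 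All steps check out; the one place needing an extra line is the genericity claim, where the locus $\{g:ghg^{-1}\in\PSU(2)\}$ should be exhibited as the preimage of the compact, at most $2$-real-dimensional set $\PSU(2)\cap\{\text{conjugates of }h\}$ under the submersion $g\mapsto ghg^{-1}$ onto the conjugacy class, hence closed of real dimension at most $4<6$. Comparing the two: the paper's argument is shorter and pinpoints \emph{where} complex openness fails (on an explicit family of lines), while yours is softer but more robust --- it would rule out complex-Zariski-closedness for any $\PSU(2)$-stable, non-$\PSL(2)$-stable set --- and it yields nonemptiness of the good set as a byproduct.
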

\begin{proof}
  The positive claim is a consequence of \Cref{pr:triv.act.zar.op}. For its negative counterpart, consider the set
  \begin{equation*}    
    \left\{
      \ell_{p,q}:=\bC p^{2n-1}q
      \ :\ 
      p\ne q\text{ linear forms in $x,y$}
    \right\}.
  \end{equation*}
  It can be identified with the space of idempotent rank-1 operators on $\bC^2$ via
  \begin{equation*}
    \bC \ell_{p,q}
    \xmapsto{\quad}
    P_{p,q}
    \ \text{with}\ 
    \im P_{p,q}=\bC p\ \text{and}\ \ker P_{p,q}=\bC q,
  \end{equation*}
  whereupon its intersection with $\{\ell\ :\ \G_{\ell}|_{\ell}=\{1\}\}$ consists precisely of the self-adjoint idempotents with respect to the $\G$-invariant inner product on $\bC^2$. The conclusion follows. 
\end{proof}

% % OLD: REPLACED BY \Cref{le:psl2vn}
% % 
% % \begin{example}\label{ex:psl2v4}
% %   Consider the $5$-dimensional irreducible representation of $\PSL(2)$, i.e. \cite[\S 11.1, p.150]{fh_rep-th} the $4^{th}$ symmetric power $S^4 \bC^2$ of the usual $\SL(2)$-representation on $\bC^2$. $S^4 V$ can be regarded \cite[\S 11.3]{fh_rep-th} as the space of homogeneous degree-4 polynomials in $x,y$.
% % 
% %   The representation restricts to one of the maximal compact subgroup $\G:=\PSU(2)\subset \PSL(2)$, and I claim that the collection of lines $\ell\le S^4\bC^2$ with
% %   \begin{equation*}
% %     \G_{\ell}|_{\ell}=\{1\}
% %   \end{equation*}
% %   is not open in the usual Zariski topology on the relevant projective space $\bP:=\bP S^4\bC^2$. Indeed, that collection's intersection with the locally closed subspace
% %   \begin{equation*}
% %     \left\{\ell_{p,q}:=\bC p^3q\ :\ p\ne q\text{ linear forms in $x,y$}\right\}
% %     \subset \bP
% %   \end{equation*}
% %   consists of precisely those $\ell_{p,q}$ for which $p,q\in \bC^2$ are not orthogonal with respect to the inner product left invariant by the $\SU(2)$-action. Orthogonality is a \emph{real}-algebraic condition on complex projective space, not a \emph{complex}-algebraic one. 
% % \end{example}

%\newpage

We will be concerned with quantizing \Cref{pr:triv.act.zar.op}, i.e. adapting it to a quantum-action setting. We will take for granted much of the basics on coalgebras, bialgebras and Hopf algebras accessible in any number of good sources, such as \cite{abe,dnr,mont,rad,swe} and many others. More specific citations to those (and other sources) are scattered throughout.

\begin{notation}\label{not:cats}
  \begin{enumerate}[(1),wide]
  \item\label{item:not:cats:coalg} $\cat{Coalg}$, $\cat{Bialg}$ and $\cat{HAlg}$ denote the categories of coalgebras, bialgebras and Hopf algebras over a field $\Bbbk$ understood from context.

  \item\label{item:not:cats:cqg} Similarly, $\cat{HAlg}^*$ and $\cat{CQGAlg}$ denote the categories of complex Hopf $*$-algebras and CQG algebras respectively.

  \item\label{item:not:cats:comod} $\cM^C$ and $\cM^C_f$ denote the categories of right comodules and finite-dimensional comodules over a coalgebra respectively, and similar conventions hold for modules with subscripts in place of superscripts (e.g. $\tensor*[_A]{\cM}{}$ means left $A$-modules). For field $\Bbbk$ we also write $\cat{Vect}=\cat{Vect}_{\Bbbk}$ for $\tensor*[_{\Bbbk}]{\cM}{}$ for emphasis. 
  \end{enumerate}  
\end{notation}
We remind the reader that we employ Sweedler notation
\begin{equation*}
  C\ni
  x
  \xmapsto{\quad\Delta\quad}
  x_1\otimes x_2
  \in C\otimes C
  \quad\text{and}\quad
  V\ni
  v
  \xmapsto{\quad\rho\quad}
  v_0\otimes v_1
  \in V\otimes C
\end{equation*}
for coalgebra comultiplications and coactions. For CQG algebras $\cO(\G)$ we will typically assume finite-dimensional comodules $V\in \cM_f^{\cO(\G)}$ as \emph{unitarized}, i.e. equipped with inner products $\Braket{-\mid-}$ satisfying \Cref{eq:untrz}.

% % OLD: WRONG
% % 
% % We begin with a simple remark, spelled out for future reference.
% % 
% % \begin{proposition}\label{pr:cont.loc}
% %   Let $\Bbbk$ be a field and
% %   \begin{equation*}
% %     Y
% %     \xrightarrow{\quad S\quad}
% %     X
% %     \xleftarrow{\quad T\quad}
% %     Z
% %   \end{equation*}
% %   morphisms in $\tensor*[_{\Bbbk}]{\cat{Vect}}{_f}$. The subset
% %   \begin{equation*}
% %     \left\{(Y',Z')\in \bG(Y)\times \bG(Z)\ :\ SY'\ge TZ'\right\}
% %     \subseteq
% %     \bG(Y)\times \bG(Z)
% %   \end{equation*}
% %   consists of the $\Bbbk$-points of a $\Bbbk$-defined closed subscheme.
% % \end{proposition}
% % \begin{proof}
% % 
% %   \mgnt{wrong}
% % \end{proof}

Recall \cite[Lemma 2.1.9]{schau_tann} that a functor $\cC\xrightarrow{\omega}\cat{Vect}_f$ affords a \emph{coendomorphism coalgebra} $\cat{coend}(\omega)\in\cat{Vect}$ with
\begin{equation*}
  \cat{Vect}(C,-)
  \cong
  \cat{Nat}\left(\omega,\omega\otimes -\right)
  \quad
  \text{as functors}
  \quad
  \cat{Vect}\xrightarrow{\quad}\cat{Set},
\end{equation*}
$\cat{Nat}$ denoting natural transformations. $\cat{coend}(\omega)$ is (as the name suggests) automatically a coalgebra, and its universality property \cite[Theorem 2.1.12]{schau_tann} consists in a universal factorization
\begin{equation*}
  \begin{tikzpicture}[>=stealth,auto,baseline=(current  bounding  box.center)]
    \path[anchor=base] 
    (0,0) node (l) {$\cC$}
    +(2,-.5) node (d) {$\cat{Vect}_f$}
    +(5,0) node (r) {$\cM_f^{\cat{coend}(\omega)}$}
    ;
    \draw[->] (l) to[bend left=6] node[pos=.5,auto] {$\scriptstyle $} (r);
    \draw[->] (l) to[bend right=6] node[pos=.5,auto,swap] {$\scriptstyle \omega$} (d);
    \draw[->] (r) to[bend left=6] node[pos=.5,auto] {$\scriptstyle \cat{forget}$} (d);
  \end{tikzpicture}
\end{equation*}
In other words, $C$ equips the objects $\omega(c)$ with comodule structures compatible with the maps $\omega(f)$, and every other such structure afforded by a coalgebra $D$ results by scalar corestriction along a unique coalgebra morphism $\cat{coend}(\omega)\to D$. 

We will also regard collections of vector spaces and morphisms as functors with appropriate domains; examples include single vector spaces $V$ as functors $\{\bullet\}\to\cat{Vect}$ (defined on the free category on one object) and morphisms $W\to V$ as functors $\left\{\bullet\to \bullet\right\}\to \cat{Vect}$. In that setup, one can speak of
\begin{equation*}
  \cat{coend}(V)\cong V^*\otimes V
\end{equation*}
and the (vector-space) \emph{pushout} \cite[Definition 11.30]{ahs}
\begin{equation}\label{eq:coend.wv}
  \begin{tikzpicture}[>=stealth,auto,baseline=(current  bounding  box.center)]
    \path[anchor=base] 
    (0,0) node (l) {$V^*\otimes W$}
    +(4,.5) node (u) {$\cat{coend}(V)\cong V^*\otimes V$}
    +(4,-.5) node (d) {$\cat{coend}(W)\cong W^*\otimes W$}
    +(8,0) node (r) {$\cat{coend}(W\to V)$}
    ;
    \draw[->] (l) to[bend left=6] node[pos=.5,auto] {$\scriptstyle $} (u);
    \draw[->] (u) to[bend left=6] node[pos=.5,auto] {$\scriptstyle $} (r);
    \draw[->] (l) to[bend right=6] node[pos=.5,auto,swap] {$\scriptstyle $} (d);
    \draw[->] (d) to[bend right=6] node[pos=.5,auto,swap] {$\scriptstyle $} (r);
  \end{tikzpicture}
\end{equation}
(see \cite[Definition 2.1.8]{schau_tann} for a justification of this pushout description).

\begin{remark}\label{re:coalg.coeff}
  For a coalgebra $C$ and $V\in \cM^C_f$ the image of the canonical map $\cat{coend}(V)\to C$ resulting from the comodule structure $V\xrightarrow{\rho}V\otimes C$ on $V$ is precisely the \emph{($V$-)coefficient coalgebra} $C_V$ defined by
  \begin{equation*}    
    \begin{tikzpicture}[>=stealth,auto,baseline=(current  bounding  box.center)]
      \path[anchor=base] 
      (0,0) node (l) {$V^*\otimes V$}
      +(3,.5) node (u) {$V^*\otimes V\otimes C$}
      +(3,-.5) node (d) {$C_V$}
      +(6,0) node (r) {$C$}
      ;
      \draw[->] (l) to[bend left=6] node[pos=.5,auto] {$\scriptstyle \id_{V^*}\otimes\rho$} (u);
      \draw[->] (u) to[bend left=6] node[pos=.5,auto] {$\scriptstyle \mathrm{ev}\otimes\id_C$} (r);
      \draw[->>] (l) to[bend right=6] node[pos=.5,auto,swap] {$\scriptstyle $} (d);
      \draw[right hook->] (d) to[bend right=6] node[pos=.5,auto,swap] {$\scriptstyle $} (r);
    \end{tikzpicture}
  \end{equation*}
  (what \cite[Proposition 2.5.3(iv)]{dnr} denotes by $A$, with that source's $M$ playing the role of our $V$).
\end{remark}

Consider, now, a coalgebra $C$, a comodule $V\in \cM^C_f$, and a linear map  $W\to V$ (not a morphism of comodules!). The data yields a pushout square
\begin{equation*}
  \begin{tikzpicture}[>=stealth,auto,baseline=(current  bounding  box.center)]
    \path[anchor=base] 
    (0,0) node (l) {$\cat{coend}(V)$}
    +(4,.5) node (u) {$C$}
    +(4,-.5) node (d) {$\cat{coend}(W\to V)$}
    +(8,0) node (r) {$C_{W\to V}$}
    ;
    \draw[->] (l) to[bend left=6] node[pos=.5,auto] {$\scriptstyle $} (u);
    \draw[->] (u) to[bend left=6] node[pos=.5,auto] {$\scriptstyle $} (r);
    \draw[->] (l) to[bend right=6] node[pos=.5,auto,swap] {$\scriptstyle $} (d);
    \draw[->] (d) to[bend right=6] node[pos=.5,auto,swap] {$\scriptstyle $} (r);
  \end{tikzpicture}
\end{equation*}
in the category $\cat{Coalg}$ of $\Bbbk$-coalgebras. Equivalently, the forgetful functor $\cat{Coalg}\to \cat{Vect}$ being left adjoint \cite[Theorem 6.4.1]{swe}, this is also a pushout in $\cat{Vect}$. 

$C_{W \to V}$, in other words, is the universal recipient of a coalgebra morphism $C\to C_{W\to V}$ for which $W\to V$ is a comodule morphism when $V$ is equipped with its original $C$- (hence also $C_{W\le V}$-)comodule structure. When $W\to V$ is an inclusion, the right-hand top and bottom arrows in \Cref{eq:coend.wv} are surjective and injective respectively, so that $C\xrightarrowdbl{}C_{W\le V}$ is a quotient coalgebra: the largest quotient coalgebra of $C$, in fact, leaving invariant the linear subspace $W\le V$ of the $C$-comodule $V$. The discussion extends also to families $\{W\to V\}$ of morphisms, and we adapt the notation accordingly in the most guessable fashion:
\begin{equation*}
  C_{W,W'\to V}
  :=
  \left(C_{W\to V}\right)_{W'\to V}
  \cong
  \left(C_{W'\to V}\right)_{W\to V},
\end{equation*}
similarly with `$\le$' in place of `$\to$', etc.

These considerations apply to bialgebras, Hopf algebras, Hopf $*$-algebras or CQG algebras as well (as does the ancillary Tannaka-reconstruction machinery; bialgebras and Hopf algebras are treated in \cite[\S\S 2.3 and 2.4]{schau_tann} respectively): for an object $H$ of one of the categories $\cC$ in \Cref{not:cats}\Cref{item:not:cats:coalg} or \Cref{item:not:cats:cqg} and 
\begin{equation*}
  V\in \cM^H_f
  ,\quad
  W\xrightarrow{\quad}V
  \text{ in \cat{Vect}}
\end{equation*}
there is a $\cC$-morphism $H\to H_{W\to V}$ \emph{initial} \cite[Definition 7.1]{ahs} among
\begin{equation*}
  H\xrightarrow{\quad}H'\text{ in }\cC
  \quad:\quad
  W\xrightarrow{\quad} V
  \text{ is a morphism in }\cM^{H'}.
\end{equation*}

The primary interest is in the Zariski openness of the set
\begin{equation*}
  \left\{
    W\in \bG(d,V)
    \ :\
    H_{W\le V}\text{ coacts trivially on }W
  \right\}
  \subseteq
  \bG(d,\dim V),
\end{equation*}
but the relevant discussion lends itself to fairly natural generalizations. The triviality in question amounts to requiring that the only simple $H_{W\le V}$-comodule supported by (i.e. appearing as a summand in) $W$ be the trivial one. This is the most restrictive of a hierarchy of conditions one might impose on $H_{W\le V}$ instead.

\begin{definition}\label{def:bd.reps}
  For $\ell\in \bZ_{\ge 0}$ a coalgebra $H$ is \emph{$\ell$-constrained on $V\in \cM^H_f$} if the dimensions of the non-trivial irreducible $H$-comodules appearing as subquotients of $V$ are all $\le \ell$.

  Note that for $\ell=0$ this specializes to requiring that all irreducible subquotients be trivial. In particular, if $V$ is semisimple as an object of $\cM^H$, the coaction must be trivial. 
\end{definition}

\begin{example}\label{ex:qab}
  For a CQG algebra $H$, consider the condition 
\begin{equation}\label{eq:qab}
  \left\{
    W\in \bG(d,V)
    \ :\
    H_{W\le V}\text{ coacts on $W$ via an abelian quantum group}
  \right\}
\end{equation}
instead. Recalling from the Introduction that abelian quantum groups are those dual to cocommutative Hopf algebras, the condition is simply that $H_{W\le V}$ be 1-constrained on $W$ in the sense of \Cref{def:bd.reps}. Or: the coaction of $H_{W\le V}$ on $W$ factors through the largest cocommutative subcoalgebra $\left(H_{W\le V}\right)_{coc}\le H_{W\le V}$. 
\end{example}

We can now state one quantum parallel to \Cref{pr:cnstrctbl} and \Cref{pr:triv.act.zar.op}, serving also as a non-commutative version of \cite[Lemma 3.12]{zbMATH07502493}.

\begin{theorem}\label{th:cqg.triv.isotr.zopen}
  Let $H=\cO(\G)\in \cat{CQGAlg}$, $V\in \cM^H_f$ a finite-dimensional comodule, $F'\subseteq F$ an inclusion of finite sets and
  \begin{equation*}
    \mathbf{d}=(d_i)_{i\in F}
    ,\quad
    \mathbf{\ell}=(\ell_j\in [0,d_j])_{j\in F'}
  \end{equation*}
  tuples of non-negative integers. 

  The set
  \begin{equation*}
    \tensor*[_{\le \mathbf{\ell}}]{\bG}{_{F,F'}}
    :=
    \left\{
      \left(W_i\right)_{i\in F}\in \bG(\mathbf{d},V)
      \ :\
      H_{\left(W_i\le V\right)_i}\text{ is $\ell_{j}$-constrained on every $W_j$}
    \right\}
    \subseteq
    \bG(d,V)
  \end{equation*}
  is open in the Zariski topology on $\Res_{\bC/\bR}\bG(\mathbf{d},V)$
  for
  \begin{equation*}
    \bG(\mathbf{d},V)
    :=
    \prod_{i\in F}\bG(d_i,V),
  \end{equation*}
  and hence dense in the standard topology if non-empty.
\end{theorem}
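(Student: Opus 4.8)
The plan is to imitate the proof of \Cref{pr:triv.act.zar.op}: show that (i) the complement of $\tensor*[_{\le\mathbf{\ell}}]{\bG}{_{F,F'}}$ is a countable union of locally closed subsets of the real variety $\Res_{\bC/\bR}\bG(\mathbf{d},V)$, and (ii) $\tensor*[_{\le\mathbf{\ell}}]{\bG}{_{F,F'}}$ is open in the Hausdorff topology. \Cref{le:count.un.constr} then upgrades (ii) to Zariski openness, and density (when the set is non-empty) follows because $\bG(\mathbf{d},V)=\prod_{i\in F}\bG(d_i,V)$, a product of irreducible varieties, is irreducible.

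First I would trade the quantum data for finite-dimensional linear algebra. Since $H=\cO(\G)$ is a CQG algebra, the coefficient coalgebra $C:=H_{V\oplus\bar V}$ is a \emph{fixed}, finite-dimensional, cosemisimple $*$-coalgebra carrying (by restriction) the coaction on $V$ and stable under the antipode. For a tuple $(W_i)_{i\in F}$, unitarizability forces $H_{(W_i\le V)_i}=H_{(W_i,\,W_i^\perp\le V)_i}$; and, $H_{(W_i\le V)_i}$ being the universal CQG quotient of $H$ making every $W_i$ a subcomodule as furnished by the discussion above, its $V$-coefficient coalgebra $\bar C_{(W_i)}:=\bigl(H_{(W_i\le V)_i}\bigr)_V$ is realised via \Cref{re:coalg.coeff} as an explicit quotient coalgebra of $C$ --- cosemisimple, since it embeds as a subcoalgebra into the CQG algebra $H_{(W_i\le V)_i}$, and depending algebraically on $(W_i)$ inside a fixed finite-dimensional ambient space. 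The clause ``$H_{(W_i\le V)_i}$ is $\ell_j$-constrained on $W_j$'' then reads: the semisimple $\bar C_{(W_i)}$-comodule $W_j$ has no non-trivial irreducible summand of dimension $>\ell_j$ --- a condition on $W_j$, $(W_i)$ and the fixed $C$ alone.

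For (i) I would run the incidence-correspondence and Chevalley argument of \Cref{pr:cnstrctbl} and the proof of \Cref{pr:triv.act.zar.op}. The role of ``finitely many conjugacy classes of $\G_W$'' there is now played, more cleanly, by the fact that a cosemisimple coalgebra of dimension $\le(\dim V)^2$ is determined up to isomorphism by the multiset of sizes of its simple summands, so the $\bar C_{(W_i)}$ realise only finitely many isomorphism types. Stratifying $\bG(\mathbf{d},V)$ accordingly (each stratum cut out by rank conditions on the algebraic family $(W_i)\mapsto\bar C_{(W_i)}$, hence constructible), the failure of ``$\ell_j$-constrained on $W_j$'' on a stratum amounts to the existence of $0\ne w\in W_j$ spanning an irreducible $\bar C_{(W_i)}$-subcomodule of dimension $e$ for some $\ell_j<e\le d_j$ --- a locally closed condition on a suitable bundle over the stratum, whose projection is constructible by Chevalley; tracking real structures exactly as in \Cref{pr:triv.act.zar.op} places everything over $\Res_{\bC/\bR}$.

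The real work, and the place where compactness of $\G$ is essential, is (ii); there is no direct substitute for the ``conjugation'' used in the classical \cite[Lemma 3.12]{zbMATH07502493}. Compactness already played a silent role above --- it is what makes $H_{(W_i\le V)_i}$, hence $\bar C_{(W_i)}$, cosemisimple, so that the isotypic decomposition of $W_j$ is well-behaved and only finitely many isomorphism types occur; this is exactly what fails in the non-compact instances of \Cref{ex:axb} and \Cref{res:cpct.needed}. For (ii) itself I would argue by contradiction in the style of that classical proof: given $(W_i^0)\in\tensor*[_{\le\mathbf{\ell}}]{\bG}{_{F,F'}}$ and a sequence $(W_i^{(k)})\to(W_i^0)$ in the complement with a fixed witnessing index $j$, use finiteness of isomorphism types (the analogue of ``finitely many conjugacy classes of $\G_W$'') to pass to a subsequence along which the coalgebras $\bar C_{(W_i^{(k)})}$, and the comodules $W_j^{(k)}$ over them, all have one fixed type, and then extract --- using the positivity and unitarity built into a CQG algebra to bound the structure constants in play --- a limiting non-trivial irreducible comodule of dimension $>\ell_j$ on a subspace of $W_j^0$. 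The crux is to verify that this limiting structure is dominated by $\bar C_{(W_i^0)}$, i.e. that passage to the limit respects the universal property defining the isotropy at $(W_i^0)$; that established, $W_j^0$ would carry a non-trivial irreducible $\bigl(H_{(W_i^0\le V)_i}\bigr)$-subcomodule of dimension $>\ell_j$, contradicting $(W_i^0)\in\tensor*[_{\le\mathbf{\ell}}]{\bG}{_{F,F'}}$.
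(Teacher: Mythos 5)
Your overall architecture --- constructibility plus Hausdorff openness, then \Cref{le:count.un.constr} --- transplants the proof of \Cref{pr:triv.act.zar.op}, but both halves have genuine gaps in the quantum setting, and the paper proves the theorem by a different, more direct route that avoids them entirely. For step (i), the assertion that $(W_i)\mapsto \bar C_{(W_i)}:=\bigl(H_{(W_i\le V)_i}\bigr)_V$ is an algebraic family of quotients of the fixed finite-dimensional coalgebra $C_V$, ``depending algebraically on $(W_i)$ inside a fixed finite-dimensional ambient space'', is unjustified. By \Cref{le:desc.ker} the kernel of $H\twoheadrightarrow H_{(W_i\le V)_i}$ is the Hopf $*$-\emph{ideal} generated by $\im W_i^{\perp}\otimes W_i$, so the coideal of $C_V$ you need --- the trace of that ideal on $C_V$ --- involves products of unboundedly many elements of $H$, i.e.\ intertwiner data from arbitrarily high tensor powers of $V$. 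There is no a priori bound, hence no single algebraic (or even visibly constructible) family; and the finiteness of isomorphism types of cosemisimple coalgebras of bounded dimension does not repair this, since knowing the abstract type of $\bar C_{(W_i)}$ says nothing about how the quotient map varies with $(W_i)$. The classical argument gets constructibility from Chevalley applied to the group variety $\G$ itself (\Cref{pr:cnstrctbl}); there is no quantum group variety to run Chevalley on. For step (ii), you correctly identify the crux --- that a limiting comodule structure extracted along $(W_i^{(k)})\to(W_i^0)$ is ``dominated by'' the isotropy at $(W_i^0)$ --- but you do not resolve it. Classically this is exactly where the subconjugacy of nearby isotropy groups \cite[Proposition I.2.4]{audin} enters, and you offer no quantum substitute; as written, (ii) restates the problem rather than proving it.

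The paper sidesteps both issues via Woronowicz's Tannaka--Krein duality \cite{woronowicz88}: the comodule category of $H_{W\le V}$ is generated by that of $H$ together with the single extra morphism $P_W$, so its intertwiner algebras on $W$ are the $*$-algebras $\Braket{P_W,\ A_{F,n}}$ generated by $P_W$ and the \emph{fixed} finite-dimensional algebras $A_{F,n}$ of $H$-comodule maps. The condition ``$\ell$-constrained on $W$'' then becomes the existence, over the countable index set of finite sets $F$, degrees $n$, elements $x_j\in A_{F,n}$ and noncommutative $*$-polynomials $p_i$, of elements $P_W\cdot p_i(P_{W;m,k},x_j)\cdot P_W$ that are linearly independent (for $\ell=0$), or of a normal such element with all eigenvalue multiplicities $\le\ell$ (for $\ell\ge 1$). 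Each of these is a Zariski-open condition on $W\in\Res_{\bC/\bR}\bG$, and a union of Zariski-open sets is Zariski-open --- no Baire category, no Chevalley, and no Hausdorff-topology input is needed. If you want to salvage your outline, this Tannakian reformulation is the missing idea for both of your steps.
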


% % OLD: SINGLE W AND W'
% % 
% % \begin{theorem}\label{th:cqg.triv.isotr.zopen}
% %   Let $H=\cO(\G)\in \cat{CQGAlg}$, $V\in \cM^H_f$ a finite-dimensional comodule, and an integer $d\in [0,\dim V]$.
% % 
% %   For $\ell,d\in \bZ_{\ge 0}$ and a morphism
% %   \begin{equation*}
% %     \Res_{\bC/\bR}
% %     \bG(d,V)
% %     \xrightarrow{\quad\psi\quad}
% %     \Res_{\bC/\bV}\bG(V)
% %     ,\quad
% %     \bG(V)
% %     :=
% %     \bigcup_{d'}\bG(d',V)
% %   \end{equation*}
% %   the set
% %   \begin{equation*}
% %     \tensor*[_{\le \ell}]{\bG}{_{\psi}}
% %     :=
% %     \left\{
% %       W\in \bG(d,V)
% %       \ :\
% %       H_{W\le V}\text{ is $\ell$-constrained on $\psi(W)$}
% %     \right\}
% %     \subseteq
% %     \bG(d,V)
% %   \end{equation*}
% %   is open in the Zariski topology on $\Res_{\bC/\bR}\bG(d,\dim V)$, and hence dense in the standard topology if non-empty.
% % \end{theorem}

An immediate consequence:

\begin{corollary}\label{cor:wv}
  Let $H\in \cat{CQGAlg}$, $V\in \cM^H_f$ a finite-dimensional comodule, and an integer $d\in [0,\dim V]$.

  All of the following subsets of $\bG(d,V)$ are open in the Zariski topology on $\Res_{\bC/\bR}\bG(d,\dim V)$, and hence dense in the standard topology if non-empty.
  \begin{align*}
    &\left\{
      W\in \bG(d,V)
      \ :\
      H_{W\le V}\text{ is $\ell$-constrained on $W$}
      \right\}\quad\left(\ell\in \bZ_{\ge 0}\right)\numberthis\label{eq:wgdv.ab.w}\\
    &\left\{
      W\in \bG(d,V)
      \ :\
      H_{W\le V}\text{ is $\ell$-constrained on $V$}
      \right\}.\numberthis\label{eq:wgdv.ab.v}\\
  \end{align*}  
\end{corollary}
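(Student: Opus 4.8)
The plan is to obtain both assertions as direct specializations of \Cref{th:cqg.triv.isotr.zopen}, choosing the index sets $F\supseteq F'$ and the tuples $\mathbf{d},\mathbf{\ell}$ so that $\tensor*[_{\le\mathbf{\ell}}]{\bG}{_{F,F'}}$ becomes, respectively, \Cref{eq:wgdv.ab.w} or \Cref{eq:wgdv.ab.v}. In both cases one may assume the relevant bound is $\le d$ (resp.\ $\le\dim V$), since otherwise the $\ell$-constraint is vacuous (every irreducible subquotient of $W$, resp.\ of $V$, already has dimension at most that bound) and the set in question is all of $\bG(d,V)$, hence trivially Zariski-open. The density clause then needs no separate argument: as recalled in the proof of \Cref{pr:triv.act.zar.op}, $\bG(d,V)$ is irreducible, so a non-empty Zariski-open subset of $\Res_{\bC/\bR}\bG(d,V)$ is automatically dense in the standard topology.

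For \Cref{eq:wgdv.ab.w} I would take $F=F'$ to be a one-element set, with $\mathbf{d}=(d)$ and $\mathbf{\ell}=(\ell)$; then $\bG(\mathbf{d},V)=\bG(d,V)$ and $\tensor*[_{\le\mathbf{\ell}}]{\bG}{_{F,F'}}$ is verbatim the set \Cref{eq:wgdv.ab.w}, so \Cref{th:cqg.triv.isotr.zopen} applies on the nose.

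For \Cref{eq:wgdv.ab.v} the idea is to record the constraint ``on $V$'' by carrying the tautological subspace $V\le V$ as a second coordinate: take $F=\{1,2\}$, $F'=\{2\}$, $\mathbf{d}=(d,\dim V)$ and $\mathbf{\ell}=(\ell)$ (indexed by $F'=\{2\}$). Since $\bG(\dim V,V)=\{V\}$ is a single reduced point, $\bG(\mathbf{d},V)$ is canonically $\bG(d,V)$; and because $\Res_{\bC/\bR}$ is a right adjoint, hence preserves the product, with $\Res_{\bC/\bR}\{V\}=\Spec\bR$ a point, the same identification survives Weil restriction. Under it a point $(W_1,W_2)$ is $(W,V)$, and the one step calling for a word of justification is the equality $H_{(W\le V,\,V\le V)}=H_{W\le V}$: adjoining the full subspace $V\le V$ imposes no further condition, the largest quotient coalgebra of $H_{W\le V}$ leaving $V$ invariant being $H_{W\le V}$ itself. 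Granting this, the defining property of $\tensor*[_{\le\mathbf{\ell}}]{\bG}{_{F,F'}}$---that $H_{(W\le V,V\le V)}$ be $\ell$-constrained on $W_2=V$---coincides with that of \Cref{eq:wgdv.ab.v}, and \Cref{th:cqg.triv.isotr.zopen} finishes the job. The only genuinely non-formal ingredient is thus this last observation about $H_{V\le V}$; everything else is bookkeeping with the statement of \Cref{th:cqg.triv.isotr.zopen} and with Weil restriction of a product one factor of which is a point.
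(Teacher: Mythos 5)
Your proposal is correct and matches the paper's own (very terse) proof: both cases are obtained by specializing \Cref{th:cqg.triv.isotr.zopen}, taking $F'=F$ a singleton for \Cref{eq:wgdv.ab.w} and adjoining a second index with $d_i=\dim V$ (forcing $W_i=V$) for \Cref{eq:wgdv.ab.v}. The extra points you verify---that $\bG(\dim V,V)$ is a point compatibly with Weil restriction and that $H_{(W\le V,\,V\le V)}=H_{W\le V}$---are correct and are left implicit in the paper.
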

\begin{proof}
  For the first statement take the inclusion $F'\subseteq F$ of \Cref{th:cqg.triv.isotr.zopen} to be the identity on a singleton; for the second, set
  \begin{equation*}
    \left(F'\subseteq F\right)
    :=
    \left(\{1\}\subset \{0,1\}\right)
  \end{equation*}
  with $d_1:=\dim V$, forcing $W_1=V$. 
\end{proof}

Note that for \emph{embeddings} $W\le V$ the canonical maps $H\to H_{W\le V}$ are quotients (in the appropriate category), and the above discussion makes plain the following remark. We follow \cite[p.16]{swe} (say) in writing
\begin{equation*}
  W^{\perp}
  :=
  \left\{f\in V^*\ :\ f|_W\equiv 0\right\}
  \quad\text{for}\quad
  W\le V\text{ in }\cat{Vect}. 
\end{equation*}

\begin{lemma}\label{le:desc.ker}
  For an object $H$ of a category $\cC$ among those of \Cref{not:cats}\Cref{item:not:cats:coalg} or \Cref{item:not:cats:cqg}, a comodule $V\in \cM^H_f$ and a linear subspace $W\le V$ the kernel of $H\xrightarrowdbl{}H_{W\le V}$ is
  \begin{itemize}[wide]
  \item the image $\im W^{\perp}\otimes W$ of $W^{\perp}\otimes W\le V^*\otimes W$ through the map
    \begin{equation}\label{eq:vw2h}
      V^*\otimes W
      \lhook\joinrel\xrightarrow{\quad\id\otimes (W\le V)\quad}
      V^*\otimes V
      \xrightarrow{\quad\text{induced by $V$'s $H$-comodule structure}\quad}
      H.
    \end{equation}
    
  \item the ideal generated by $\im W^{\perp}\otimes W$ if $\cC=\cat{Bialg}$;
    
  \item the Hopf ideal generated by $\im W^{\perp}\otimes W$ if $\cC=\cat{HAlg}$;
   
  \item and the Hopf $*$-ideal generated by $\im W^{\perp}\otimes W$ if $\cC\in \left\{\cat{HAlg}^*,\ \cat{CQGAlg}\right\}$.  \qedhere
  \end{itemize}
\end{lemma}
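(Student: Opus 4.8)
The plan is to identify the kernel of $H \xrightarrowdbl{} H_{W\le V}$ concretely and then check that the quotient has the correct universal property. First I would recall the construction of $H_{W\le V}$ via the pushout square preceding the statement: $H_{W\le V}$ is the pushout of $\cat{coend}(V)\cong V^*\otimes V \to H$ along $\cat{coend}(V)\to \cat{coend}(W\le V)$, and since the forgetful functor $\cat{Coalg}\to\cat{Vect}$ preserves colimits, this is also a pushout of vector spaces. For an inclusion $W\le V$, the map $W^*\otimes W \to V^*\otimes V$ dual to restriction is injective, and one computes that $\cat{coend}(W\le V)$ is the quotient of $V^*\otimes V$ by the subspace $\im(V^*\otimes W \to V^*\otimes V)$ modulo $\im(W^*\otimes W)$ — concretely, the kernel of $V^*\otimes V \twoheadrightarrow \cat{coend}(W\le V)$ is exactly $W^\perp\otimes W$ viewed inside $V^*\otimes V$ via $\id\otimes(W\le V)$, using $V^*\otimes W = (W^\perp\otimes W)\oplus(W^*\otimes W)$ after a choice of complement. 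Pushing this out along $V^*\otimes V \to H$, the kernel of $H\to H_{W\le V}$ as a coalgebra (equivalently, as a vector space, by the colimit-preservation remark) is precisely the image of $W^\perp\otimes W$ under the composite \Cref{eq:vw2h}, which is the first bullet.

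For the remaining bullets I would argue by the analogous universal property in each category $\cC$. For $\cC=\cat{Bialg}$, the object $H_{W\le V}$ is initial among bialgebra maps $H\to H'$ making $W\le V$ a subcomodule over $H'$; its kernel is a biideal (two-sided ideal that is also a coideal) containing $\im(W^\perp\otimes W)$, and conversely the quotient of $H$ by the biideal $I$ generated by $\im(W^\perp\otimes W)$ is a bialgebra over which $W\le V$ is a subcomodule (because on the coalgebra level the coaction already lands in $W\otimes H/I$ once we kill $\im(W^\perp\otimes W)$), so $I$ is contained in — hence equal to — the kernel. The same argument works verbatim for $\cat{HAlg}$ with ``biideal'' replaced by ``Hopf ideal'' (a biideal stable under the antipode), using that $S$ of a coalgebra-quotient-compatible ideal generates a Hopf ideal; and for $\cat{HAlg}^*$ and $\cat{CQGAlg}$ with ``Hopf $*$-ideal'' (Hopf ideal stable under $*$), using that the $*$-structure on $H$ descends to $H/I$ precisely when $I$ is $*$-stable, and that the CQG condition (unitarizability of finite-dimensional comodules) is automatically inherited by quotient Hopf $*$-algebras, so no further constraint is imposed in the CQG case beyond the Hopf $*$-algebra one.

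The one point that needs a little care — and the main (mild) obstacle — is verifying that $\im(W^\perp\otimes W)$ already \emph{is} a coideal of $H$ (not merely that the generated ideal is a biideal), so that in the plain-coalgebra case no ``generation'' step is needed and the first bullet is literally the image, with no closure operation. This follows from the comodule coassociativity: writing $\rho(v)=v_0\otimes v_1$ for $v\in V$ and noting $\Delta(v_1) = $ (coefficient-coalgebra comultiplication applied to $v_1$) $= v_{00}\otimes\text{(stuff)}\otimes v_{01}$ in the appropriate sense, one checks $\Delta$ sends the image of $f\otimes w$ (with $f\in W^\perp$) into $H\otimes \im(W^\perp\otimes W) + \im(W^\perp\otimes W)\otimes H$ by inserting a complement of $W$ in $V$ and using that $f$ annihilates $W$; similarly $\varepsilon$ kills it since $\varepsilon$ of the image of $f\otimes w$ is $f(w)=0$. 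I would present this computation compactly in Sweedler notation rather than belaboring the basis choices. Everything else is then a routine unwinding of universal properties, so I would keep the write-up short and point to \cite{dnr} or \cite{swe} for the standard facts about biideals, Hopf ideals and $*$-ideals descending along quotients.
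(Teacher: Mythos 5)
Your proposal is correct and takes the route the paper intends: the paper offers no separate proof of this lemma, presenting it as immediate from the preceding pushout description of $H_{W\le V}$ and the universal properties in each category, and your write-up supplies exactly those details. The one point you rightly single out for verification --- that $\im W^{\perp}\otimes W$ is already a coideal, so no closure is needed in the plain coalgebra case --- checks out by the matrix-coefficient computation you sketch.
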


\pf{th:cqg.triv.isotr.zopen}
\begin{th:cqg.triv.isotr.zopen}
  Density once more follows from openness and the irreducibility of the Grassmannian. We regard all $H$-comodules as unitarized, and write $P_\bullet$ for the orthogonal projection onto a subspace $\bullet\le V\in \cM_f^H$ with respect to the $\G$-invariant inner product $\Braket{-\mid-}$ on $V$. 

  We focus on the simple case of singletons $F'=F$, so that $H_{(W_i\le V)}$ is simply $H_{W\le V}$; the added complications in passing to the general case are only notational, and the simplification will help avoid those. 
  
  \begin{enumerate}[(I),wide]
  \item \textbf{: The case $\ell=0$.} The Tannaka-Krein-reconstruction gadgetry of \cite[Theorems 1.2 and 1.3]{woronowicz88} allows for some convenient recasting of the condition that $W\in\bG:= \bG(d,V)$ belong to $\tensor*[_{\le 0}]{\bG}{_{(F,F')}}$. Consider finite sets $F=\left\{\alpha\right\}$ of (isomorphism classes of) simple $H$-comodules $V_{\alpha}$, and write
    \begin{equation*}
      A_{F,n}:=\cM^{H}\left(
        T^{\le n}\bigoplus_{\alpha\in F}\alpha,\ T^{\le n}\bigoplus_{\alpha\in F}\alpha
      \right)
      ,\quad
      T^{\le n}\bullet:=\bC\oplus \bullet\oplus\cdots\oplus \bullet^{\otimes n}
    \end{equation*}
    (the finite-dimensional $H$-comodule-endomorphism $C^*$-algebra). We then have
    \begin{equation}\label{eq:cont.endw}
      W\in \tensor*[_{\le 0}]{\bG}{_{(F,F')}}
      \xLeftrightarrow{\quad}
      \bigcup_{\substack{\text{finite }F\\n}}
      \Braket{P_W,\ A_{F,n}}\ge \End(W)
    \end{equation}
    $\Braket{\bullet}$ denoting the $*$-algebra generated by the arguments of the angled brackets, with generation understood to include tensoring operators. The latter condition can be rephrased as the requirement that
    \begin{equation*}
      P_{W}\cdot p_i\left(P_{W;m,k},\  x_j\right)\cdot P_{W}
      ,\quad
      1\le i\le d^2
    \end{equation*}
    be linearly independent for some choice of $x_j\in A_{F,n}$ and non-commutative $*$-polynomials $p_i$, where $P_{W;m,k}$ denotes the endomorphism
    \begin{equation*}
      \id_V^{\otimes(k-1)}\otimes P_W\otimes \id_V^{\otimes(m-k)}
      \in
      \End(V^{\otimes m})
    \end{equation*}
    (recall that $d=\dim W$, i.e. $d^2=\dim \End(W)$). In summary:
    \begin{equation*}
      \tensor*[_{\le 0}]{\bG}{_{(F,F')}}
      =
      \bigcup_{\substack{\text{finite }F\\x_j\in A_{F,n}\\\text{polynomials }p_i}}
      \left\{W\in \bG\ :\ P_{W}\cdot p_i\left(P_{W;m,k},\  x_j\right)\cdot P_{W}\text{ linearly independent}\right\}.
    \end{equation*}
    Each individual set on the right-hand side is plainly Zariski-open in $\Res_{\bC/\bR}\bG$, hence the conclusion.
    
  \item \textbf{: The case $\ell\ge 1$.} In the spirit of \Cref{eq:cont.endw}, the condition can be phrased as the requirement that some $\Braket{P_W,\ A_{F,n}}$ contain a family of projections $P_i\le P_{W}$ with
    \begin{equation*}
      \mathrm{rank}~P_i\le \ell
      ,\quad
      \sum_i P_i=1=\id_{W}.
    \end{equation*}
    This is also equivalent to the requirement that some normal operator
    \begin{equation*}
      P_{W}\cdot p\left(P_{W;m,k},\  x_j\right)\cdot P_{W}
      ,\quad
      x_j\in A_{F,n}
    \end{equation*}
    have all eigenvalue multiplicities $\le \ell$: again a Zariski-open condition.  \qedhere
  \end{enumerate}
\end{th:cqg.triv.isotr.zopen}

In particular, the triviality and abelianness of the quantum groups attached to $H_{W\le V}$ is an open condition; cf. \cite[Corollary 3.9]{zbMATH07502493}.

\pf{thn:qg.triv.ab}
\begin{thn:qg.triv.ab}
  After minor rephrasing, the statement claims that for a CQG algebra $H$ and $V\in \cM^H_f$ the subsets
  \begin{align*}
    &\left\{
      W\in \bG(d,V)
      \ :\
      H_{W\le V}\text{ coacts trivially on $W$ or $V$}
      \right\}
      \subseteq
      \bG(d,\dim V)\\
    &\left\{
      W\in \bG(d,V)
      \ :\
      H_{W\le V}\text{ coacts on $W$ or $V$ via an abelian quantum group}
      \right\}\\
  \end{align*}
  are open in the Zariski topology on $\Res_{\bC/\bR}\bG(d,\dim V)$, and hence dense in the standard topology if non-empty. Indeed: specialize \Cref{cor:wv} to $\ell=0$ or $1$.
\end{thn:qg.triv.ab}

\section{Generating operator algebras}\label{se:osgen}

Recall \cite[Lemma 7.2]{zbMATH07289424} that for any finite-dimensional $C^*$-algebra $A$ the space
\begin{equation}\label{eq:gen.pairs.sa}
  \left\{(a,b)\in A_{sa}^2\ :\ \text{$a$ and $b$ generate $A$ as a $C^*$-algebra}\right\}\subset A_{sa}^2
\end{equation}
of generating pairs of self-adjoint elements is dense. Somewhat more is true. Recall that an algebra $A$ (over a commutative ring, though we work mostly over fields here) is \emph{separable} \cite[\S 10.2, Definition]{pierce_assoc} if it is projective as a module over its \emph{enveloping algebra} \cite[\S 10.1, Definition]{pierce_assoc} $A^e:=A\otimes A^{op}$.

\begin{theorem}\label{th:gen.zar.den.sep}
  Let $A$ be a finite-dimensional algebra over a field $\Bbbk$.

  \begin{enumerate}[(1),wide]
  \item The subspace
    \begin{equation}\label{eq:gen.tups}
      \cat{Gen}_n(A)
      :=
      \left\{(a_i)_{i=1}^n\in A^n\ :\ \text{$a_i$ generate $A$ as a $\Bbbk$-algebra}\right\}\subset A^n
    \end{equation}
    constitutes the $\Bbbk$-rational points of a $\Bbbk$-defined open subscheme of $\Spec(A^n\otimes_{\Bbbk}\bK)$ for any algebraically-closed $\bK\ge \Bbbk$.    

  \item If $\Bbbk$ is infinite and $A$ is separable then $\cat{Gen}_2(A)$ is Zariski-dense.   
  \end{enumerate}
\end{theorem}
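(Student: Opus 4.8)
The plan is to handle the two parts separately, the first by an elementary determinantal/rank argument and the second by reducing to the structure theory of separable algebras (a finite product of matrix algebras over division algebras, which over a large enough field or after base change are just matrix algebras).

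For part~(1), fix a basis of $A$ over $\Bbbk$, so $A^n\cong \Bbbk^{n\dim A}$ as a $\Bbbk$-scheme, and likewise after base change to $\bK$. Given a tuple $\mathbf{a}=(a_i)$, the subalgebra it generates is the span of all words $w(\mathbf{a})=a_{i_1}\cdots a_{i_k}$ in the $a_i$ (including the empty word $1$). Since $\dim A=:m$ is finite, $\mathbf{a}$ generates $A$ if and only if among the words of length $\le m-1$ (Nakayama-type bound: once the span stops growing it has stabilized, and it can grow at most $m-1$ times after the initial $1$) there are $m$ that are linearly independent, i.e. iff the $m\times (\text{number of such words})$ matrix of coordinates of these words has rank $m$. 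Each entry of that matrix is a polynomial in the coordinates of $\mathbf{a}$ with coefficients in $\Bbbk$ (the structure constants of $A$), so ``some maximal minor is nonzero'' defines a $\Bbbk$-defined open subscheme of $\Spec(A^n\otimes_\Bbbk\bK)$, and its $\Bbbk$-points are exactly $\cat{Gen}_n(A)$. This settles~(1); it requires no hypothesis on $\Bbbk$ or on $A$ beyond finite-dimensionality.

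For part~(2), since $\cat{Gen}_2(A)$ is the $\Bbbk$-points of a $\Bbbk$-defined open subscheme $\mathcal{U}$ of an affine space over $\Bbbk$, and $\Bbbk$ is infinite, Zariski-density of $\cat{Gen}_2(A)$ in $A^2$ is equivalent to $\mathcal{U}$ being nonempty (a nonempty open subscheme of $\mathbb{A}^N_\Bbbk$ has Zariski-dense $\Bbbk$-points when $\Bbbk$ is infinite, and conversely if $\mathcal{U}=\emptyset$ there is nothing to be dense). So it suffices to produce a single generating pair after base change to $\bK=\ol\Bbbk$ — equivalently, to show $A\otimes_\Bbbk\bK$ is generated by $2$ elements. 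Now separability of $A$ is preserved by base change, and a separable $\bK$-algebra over an algebraically closed field is a finite product $\prod_{t=1}^r M_{n_t}(\bK)$ (by Wedderburn, all division rings being $\bK$ itself). It is classical that each $M_{n}(\bK)$ is $2$-generated (e.g. by a permutation-type matrix for an $n$-cycle together with a diagonal matrix with distinct entries, or by $E_{11}$ and the cyclic shift); and a finite product of $2$-generated algebras over an infinite field is again $2$-generated, by the standard trick: choose generators $(x_t,y_t)$ for the $t$-th factor, and take $x=(x_t+c_t 1)_t$, $y=(y_t)_t$ for scalars $c_t\in\bK$ chosen so that the minimal polynomials of the blocks $x_t+c_t 1$ are pairwise coprime (possible since $\bK$ is infinite); then there is a polynomial $e(x)$ equal to the central idempotent $1_t$ on factor $t$ and $0$ elsewhere, so each factor is recovered inside the algebra generated by $x,y$. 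This produces the required $2$-generating pair over $\bK$, hence $\mathcal{U}\ne\emptyset$, hence $\cat{Gen}_2(A)$ is Zariski-dense.

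The main obstacle is the combination of bookkeeping in part~(2): one must be a little careful that $2$-generation (rather than some larger number) survives taking finite products, which is exactly where the infinitude of $\Bbbk$ (or $\bK$) enters — over $\mathbb{F}_2$, for instance, $\mathbb{F}_2\times\mathbb{F}_2\times\mathbb{F}_2$ is not $2$-generated as an $\mathbb{F}_2$-algebra, which is why separability alone does not suffice and the hypothesis $n\ge 2$ together with $\Bbbk$ infinite is genuinely needed. Everything else — the word-length bound in part~(1), the passage between $\bK$-nonemptiness and $\Bbbk$-density, the Wedderburn decomposition — is routine. I would also remark (for the deduction of \Cref{cor:gen.cast.zar.den}) that a finite-dimensional $C^*$-algebra is automatically separable and defined over $\bR$ with $\ol{\bR}=\bC$, so part~(2) applies and recovers density of generating pairs, even self-adjoint ones after intersecting with the real form $A_{sa}^2$.
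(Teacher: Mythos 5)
Your proposal is correct and follows essentially the same path as the paper: part (1) via the observation that the spanning conditions on words of bounded length are polynomial (you make the stabilization bound $\le m-1$ explicit where the paper invokes noetherianness), and part (2) by reducing, via openness and the density of $\Bbbk$-points in $\overline{A}{}^2$, to exhibiting a single generating pair for a product of matrix algebras over $\overline{\Bbbk}$. The only divergence is cosmetic: the paper writes down one explicit pair in the block-diagonal embedding $A\subset M_n$ (a diagonal matrix with distinct, nonzero eigenvalues having distinct ratios, together with the sum of the off-diagonal matrix units lying in $A$), whereas you $2$-generate each matrix factor and glue the factors with the Chinese-remainder/central-idempotent trick; both are standard and valid.
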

\begin{proof}
  \begin{enumerate}[(1),wide]
  \item Write
    \begin{equation*}
      \begin{aligned}
        \cat{Gen}_n(A)
        &=
          \bigcup_{m\in \bZ_{\ge 0}}\cat{Gen}^{\le m}_n(A)\\
        \cat{Gen}^{\le m}_n(A)
        &:=
          \left\{(a_i)\in A^n\ :\ A=\spn\left\{\text{degree-$(\le m)$ polynomials in the $a_i$}\right\}\right\}.
      \end{aligned}
    \end{equation*}
    Each $\cat{Gen}^{\le m}_n(A)$ is plainly $\Bbbk$-defined and open, and the union will stabilize (e.g. because by \cite[AG \S 3.4]{brl} the topological space $\Spec(A)$ is noetherian). 
    
  \item The infinitude of $\Bbbk$ ensures that
    \begin{equation*}
      A^n\le \overline{A}^n:=A^n\otimes_{\Bbbk}\overline{\Bbbk}
      \quad
      \left(\overline{\Bbbk}\ge \Bbbk\text{ an algebraic closure}\right)
    \end{equation*}
    is Zariski-dense; given openness, we may assume $\Bbbk$ algebraically closed to begin with ($\overline{A}$ also being separable \cite[\S 10.6, Proposition a]{pierce_assoc}). Moreover (again by openness), it suffices to prove \Cref{eq:gen.tups} non-empty.

    Decompose $A$ as a product $\prod_{i=1}^k M_{n_i}(\Bbbk)$ \cite[\S 3.5, Corollary b]{pierce_assoc}, embed it into $M_{n}(\Bbbk)$ block-diagonally for $n:=\sum n_i$, and consider the two operators
    \begin{equation*}
      a:=\mathrm{diag}(\lambda_j)_{j=1}^{n}
      \quad\text{and}\quad
      b:=\sum_{\substack{i\ne j\\e_{ij}\in A\subset M_n}}e_{ij}.
    \end{equation*}
    If the $\lambda_i$ are non-zero, distinct, with distinct $\lambda_{i}\lambda_j^{-1}$ for $1\le i\ne j\le n$ the two will generate the subalgebra $A\subset M_n$. 
  \end{enumerate}
\end{proof}

\begin{remark}\label{re:ss.nec}
  \Cref{th:gen.zar.den.sep} certainly does not hold for non-semisimple algebras: $\Bbbk[x_i,\ 1\le i\le n]/(x_i)^2$ cannot be generated by fewer than $n$ elements. 
\end{remark}

The following consequence of \Cref{th:gen.zar.den.sep} strengthens the aforementioned \cite[Lemma 7.2]{zbMATH07289424} as well as \cite[Lemma 3.10]{zbMATH07502493}, which it parallels. 

\begin{corollary}\label{cor:gen.cast.zar.den}
  For a finite-dimensional $C^*$-algebra $A$ the inclusion \Cref{eq:gen.pairs.sa} is Zariski-open and dense. 
\end{corollary}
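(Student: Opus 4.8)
The plan is to read the statement off \Cref{th:gen.zar.den.sep}, applied over $\Bbbk=\bC$, the only genuine work being the transfer of Zariski-openness and of density from the complex affine space $A^2$ to its real form $A_{sa}^2$.

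First I would reconcile the two notions of generation. For $a,b\in A_{sa}$ the unital complex subalgebra of $A$ generated by $a$ and $b$ is automatically closed under the involution (the adjoint of a monomial in the self-adjoint elements $a,b$ is the reversed monomial) and, being finite-dimensional, is norm-closed; it therefore coincides with the $C^*$-subalgebra generated by $a$ and $b$. Hence the set displayed in \Cref{eq:gen.pairs.sa} is precisely $\cat{Gen}_2(A)\cap A_{sa}^2$, where $\cat{Gen}_2(A)\subseteq A^2$ is as in \Cref{th:gen.zar.den.sep} for $\Bbbk=\bC$. That complex algebra is a finite product of matrix algebras $M_{n_i}(\bC)$, hence separable over the infinite field $\bC$, exactly as used in the proof of \Cref{th:gen.zar.den.sep}; so both assertions of that theorem are available.

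For openness, the first assertion of \Cref{th:gen.zar.den.sep} gives that $\cat{Gen}_2(A)$ is Zariski-open in the complex affine space $A^2$. Regarding $A^2$ as its underlying real variety $\Res_{\bC/\bR}\bA^{2\dim_{\bC}A}_{\bC}$, the real-linear decomposition $A=A_{sa}\oplus iA_{sa}$ exhibits $A_{sa}^2$ as an $\bR$-linear subspace of $A^2$, so the inclusion $A_{sa}^2\hookrightarrow A^2$ is a morphism of real varieties and $\cat{Gen}_2(A)\cap A_{sa}^2$ is Zariski-open in $A_{sa}^2$. For density I would use the second assertion of \Cref{th:gen.zar.den.sep}: $\cat{Gen}_2(A)$ is then a non-empty Zariski-open subset of $A^2\cong\bC^N$ with $N:=2\dim_{\bC}A$, hence it contains $\{f\ne 0\}$ for some nonzero $f\in\bC[z_1,\dots,z_N]$, in complex coordinates adapted to a real basis of $A_{sa}$ for which $A_{sa}^2$ is the real locus $\bR^N\subseteq\bC^N$. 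Since $\bR^N$ is Zariski-dense in $\bC^N$ the polynomial $f$ does not vanish identically on it, so $\{f\ne 0\}\cap\bR^N$ is a non-empty, hence dense, Zariski-open subset of the irreducible variety $\bR^N=A_{sa}^2$ contained in $\cat{Gen}_2(A)\cap A_{sa}^2$; density of the latter (Zariski, and a fortiori standard) follows. I expect no real obstacle: the substance is \Cref{th:gen.zar.den.sep} itself, and the only point requiring care is this bookkeeping along the merely $\bR$-linear embedding $A_{sa}^2\hookrightarrow A^2$.
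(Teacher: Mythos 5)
Your proof is correct and follows essentially the same route as the paper, which simply invokes \Cref{th:gen.zar.den.sep} with $\Bbbk=\bC$ together with the observation that $A_{sa}\subset A$ is Zariski-dense. Your additional bookkeeping (identifying $*$-algebra with algebra generation for self-adjoint tuples, and restricting along the $\bR$-linear embedding $A_{sa}^2\hookrightarrow A^2$) just spells out what the paper leaves implicit.
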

\begin{proof}
  This is an immediate application of \Cref{th:gen.zar.den.sep} with $\Bbbk:=\bC$, once we observe that $A_{sa}\subset A$ is Zariski-dense.
\end{proof}

%\newpage

%%%%%%%%%%%%%%%%%%%%%%%%%%%%%%%%
%%%%%%%%%%%%%%%%%%%%%%%%%%%%%%%%

\addcontentsline{toc}{section}{References}
%\bibliography{bib}{}
%\bibliographystyle{plain}

\def\polhk#1{\setbox0=\hbox{#1}{\ooalign{\hidewidth
  \lower1.5ex\hbox{`}\hidewidth\crcr\unhbox0}}}

\Addresses

\end{document}